\newtheorem*{rep@theorem}{\rep@title}
\newcommand{\newreptheorem}[2]{%
\newenvironment{rep#1}[1]{%
 \def\rep@title{#2 \ref{##1}}%
 \begin{rep@theorem}}%
 {\end{rep@theorem}}}
\theoremstyle{plain}
\newtheorem{theorem}{Theorem}[section]
\newtheorem{lemma}[theorem]{Lemma}
\newtheorem*{lemma*}{Lemma}
\newtheorem{proposition}[theorem]{Proposition}
\newtheorem*{proposition*}{Proposition}
\newtheorem{corollary}[theorem]{Corollary}
\theoremstyle{remark}
\newtheorem{remark}[theorem]{Remark}
\newtheorem{example}[theorem]{Example}
\newtheorem{definition}[theorem]{Definition}
\newtheorem{assumption}[theorem]{Assumption}
\newcommand{\R}{\mathbb{R}}
\newcommand{\N}{\mathbb{N}}
\renewcommand{\Pr}[2][{}]{\mathbb{P}_{#1}\left(#2\right)}
\newcommand{\Prd}[2]{\mathbb{P}_#1\left(#2\right)}
\newcommand{\E}{\mathbb{E}}
\newcommand{\cupdu}[2]{
\underset{#1}{\overset{#2}{\bigcup }}}
\newcommand{\capdu}[2]{
\underset{#1}{\overset{#2}{\bigcap }}}
\newcommand{\sumdu}[2]{
\overset{#2}{\underset{#1}{\sum }}\,}
\newcommand{\proddu}[2]{
\overset{#2}{\underset{#1}{\prod }}\,}
\newcommand{\limn}[0]{
\underset{n\to \infty }{\lim }\;}
\newcommand\independent{\protect\mathpalette{\protect\independenT}{\perp}}
\def\independenT#1#2{\mathrel{\rlap{$#1#2$}\mkern2mu{#1#2}}}
\newcommand{\midvert}{\;\middle\vert\;}
\begin{document}

\begin{frontmatter}

\title{Ranking-based rich-get-richer processes}
\runtitle{Ranking-based rich-get-richer processes}

\begin{aug}
\author[A]{\fnms{Pantelis P.} \snm{Analytis}},
\author[A]{\fnms{Alexandros} \snm{Gelastopoulos}}
\and
\author[B]{\fnms{Hrvoje} \snm{Stojic}}

\address[A]{Danish Institute for Advanced Study, University of Southern Denmark, Odense, Denmark}
\address[B]{Department of Economics and Business, Pompeu Fabra University, Barcelona, Spain}
\end{aug}

\begin{abstract}
We study a discrete-time Markov process $X_n\in\R^d$ for which the distribution of the future increments depends only on the relative ranking of its components (descending order by value). We endow the process with a rich-get-richer assumption and show that, together with a finite second moments assumption, it is enough to guarantee almost sure convergence of $X_n/n$. We characterize the possible limits if one is free to choose the initial state, and give a condition under which the initial state is irrelevant. Finally, we show how our framework can account for ranking-based P\'olya urns and can be used to study ranking-algorithms for web interfaces.
\end{abstract}

\begin{keyword}
\kwd{ranking}
\kwd{rich-get-richer}
\kwd{Markov process}
\kwd{P\'olya urn}
\end{keyword}

\end{frontmatter}

\section{Introduction}

Wealthy individuals tend to become even wealthier \citep{piketty2015capital}, popular websites become even more popular \citep{barabasi1999emergence}, and highly cited papers overshadow less cited ones, earning more future citations \citep{price1976general,redner1998popular}. Social and technological systems that preserve and amplify existing inequalities are said to be characterized by rich-get-richer dynamics \citep{merton1968matthew,price1976general,yule1925ii}. In these systems, initial conditions and randomness early in time drastically affect the course of future events---advantages obtained by agents early on are conserved and reinforced \citep{arthur1989competing,denrell2014perspective}. The above can result in socially objectionable outcomes, such as pervasive inequality in the distribution of wealth, and unfair outcomes where talented people or promising technologies cannot compete with already established ones \citep{page2006path}.

In many systems, and increasingly so in the online world, the rich-get-richer dynamics depend on the ranks of the various objects (people, options, institutions etc.) in terms of some quantity of interest. For example, companies or academic institutions might receive job applications based on some status ranking, which in turn can help these institutions retain their status by employing qualified individuals \citep{podolny1993status}. Similarly, scientists might submit their work to journals taking into account the journal's relative rank in terms of impact factor or some other metric, thus highly ranked journals are more likely to publish work of good quality and retain their position in the ranking \citep{hudson2013ranking,laband2013use}. Last but not least, users of online interfaces are more likely to click on entries that appear at the top of the screen, hence making these entries appear more relevant to other users \citep{joachims2005accurately,salganik2006experimental}. In all of these cases, it is the ranking of the different entities that confers an advantage to the more successful ones and thus drives the rich-get-richer dynamics. 

Although examples of systems characterized by ranking-based rich-get-richer dynamics abound, we still do not understand their dynamics and long-term behavior. There are only two previous relevant works, which have been developed in the context of P\'olya urns and can model ranking-based rich-get-richer systems as extreme cases. In the first such work Hill et al. \cite{hill1980strong} study the case of a P\'olya urn with balls of $d=2$ colors, one ball added at a time, and allow the probability of adding a red ball to be a function of the proportion of red balls. In other words, there is some function $f:[0,1]\to [0,1]$, such that the probability of the next ball being red is $f\left(X_n\slash n\right)$, where $X_n$ denotes the number of red balls at time $n$. If $f$ is taken to be constant in $\left[0,\frac{1}{2}\right) $ and in $\left(\frac{1}{2},1\right] $, then we get a ranking-based urn. In \cite{hill1980strong} it is shown that $X_n\slash n$ converges a.s., and then some results are given regarding the support of the limit (see also \cref{sectionPolyaUrn}). Importantly, a subset of the results in \citep{hill1980strong} allows a nowhere dense set of discontinuities for $f$, so they apply to the ranking-based case. It is not obvious though how to generalize these results to P\'olya urns with more colors or other types of processes.

The usual generalization to $d\in \N $ is to have the probability of adding a ball of color $i$ be proportional to a function of the count (or proportion) of balls of that color only, thus not allowing comparison of the counts of balls of different colors (for recent examples see \citep{chung2003generalizations,collevecchio2013preferential,laruelle2019nonlinear} - see also \citep{pemantle2007survey,zhu2009nonlinear} for surveys of results). A notable exception is the work of Arthur et al. \citep{arthur1986strong}, where the probabilities are allowed to depend on the whole vector of proportions of balls of each color. More precisely, there is an urn function
\begin{equation}
    f:\Delta ^{d-1}\to \Delta ^{d-1},\ \ \text{ where }\ \ \Delta ^{d-1}:= \left\{ x\in [0,1]^d,\ \sumdu{i}{}\,  x_i=1\right\},
\end{equation}
which takes as argument the vector of proportions of balls of each color, and its $i$-th component gives the probability of adding a ball of color $i$. The authors generalize some of the results in \citep{hill1980strong} to any $d\in \N$. In particular they show that under mild conditions on $f$ the process $X_n\slash n$ (where $X_n$ is now a vector) has positive probability of converging to any point $\theta \in \Delta ^{d-1}$ that is a \textit{stable} fixed point of $f$. According to the definition of stability used, in the ranking-based case all fixed points whose coordinates are all distinct are necessarily stable (see \cref{sectionPolyaUrn} for details). However, it is not claimed that the stable fixed points of $f$ are the only possible limits for $X_n\slash n$. Also, convergence of $X_n\slash n$ is shown only for certain special cases that do not cover ranking-based urns.

Even in the cases where the above results are applicable to ranking-based systems, their main limitation is that they are restricted to simple P\'olya-type processes, that is processes whose components increase one at a time and the increments are binary. But in many systems with ranking-dependent dynamics (e.g. journal impact factors, university ratings) the quantity of interest can take continuous values and the various components may change simultaneously. Given the paucity of mathematical work that can apply to systems with ranking-based rich-get-richer dynamics, especially for more general increments, our understanding of ranking-based processes remains limited.

In this work, we treat the problem in the context of 
(discrete-time) Markov processes, with an explicit dependence of the dynamics on the ranking. Specifically, we consider a non-homogeneous random walk in $\R^d$, for which the distribution of the steps depends only on the ranking of its components (descending order of their values). The fact that there are only finitely many possible rankings for a vector of $d$ components, and that the distribution of the jumps of the process does not change as long as the ranking doesn't change, allows us to consider separately the transitions between rankings and the dynamics when the ranking remains constant, the latter being nothing more than the dynamics of a sum of i.i.d. random vectors. Indeed, if the ranking converges to some limit value (i.e. eventually becomes constant), then a suitable application of the Strong Law of Large Numbers and the Central Limit Theorem gives us the behavior of $X^i_n\slash n$ in the limit (\cref{marketShareTheorem}). Therefore, the study of the long-term behavior of such processes is in large part a study of the long-term behavior of the ranking. This simplifies the study considerably and allows us to derive results under few assumptions. An essential assumption we make in order to show convergence is a type of a rich-get-richer condition, more precisely a ranking-based reinforcement condition (\cref{qualityOrderingAssumption}), and it is a weaker version of the following statement: conditioned on $X^i_n>X^j_n$, the difference $X^i_{n+1}-X^i_n$ has a larger mean than $X^j_{n+1}-X^j_n$.

Our results can be summarized as follows: under the above mentioned ranking-based reinforcement assumption and a finite second moments assumption, we show that in the limit $n\to \infty $ the ranking of the components of the process stops changing almost surely (\cref{settlingTheorem}). Moreover, we characterize the possible rankings in the limit (\cref{theoremTerminalRankings}). The latter result is independent of \cref{qualityOrderingAssumption}, but if this assumption holds, then we can characterize the possible limits for $X_n\slash n$ as well (\cref{marketShareTheorem}). By ``possible limit'' we mean that the probability of converging to this value is positive, for \textit{some} initial condition (distribution of $X_0$). \Cref{propositionLimitRankingsSufficient} gives a condition under which the probability of converging to any of the possible limits is positive for any initial condition. Next we specialize our results to the case of ranking-based P\'olya urns and relate them to the fixed points of the urn function (\cref{propositionPolya}), which allows a comparison with previous results. Even in this special case we get novel results regarding the limiting behavior of $X_n\slash n$. Finally, we describe an application to online rank-ordered interfaces.

\section{Main results}
\label{sectionResultsGeneral}

We begin by defining what we mean by ranking (\cref{sectionRanking}) and ranking-based processes (\cref{sectionFormulation}). \Cref{sectionSettling,sectionLimitRankings} contain our two main results: convergence of ranking and characterization of terminal rankings. In \cref{sectionLimitTheorems} we look at the limit behavior of the process $X_n$ itself and in \cref{sectionInitialDistributions} we consider the role of the initial condition.

\subsection{Rankings}
\label{sectionRanking}
For a finite set $S$, we denote by $|S|$ its cardinality and by $[S]=\{1,\ldots ,|S|\}$ the set of the first $|S|$ positive integers.

\begin{definition}
\label{definitionRanking}
Let $S$ be a finite set. A ranking of $S$ is a function $r:S\to [S]$ with the property that for each $a\in S$, \begin{equation}
\label{dummyEq67}
    card\{b\in S:r(b)<r(a)\}=r(a)-1.
\end{equation}
\end{definition}
We will say that $a$ is ranked higher than $b$ if $r(a)<r(b)$. \Cref{dummyEq67} requires that, for each $a\in S$, exactly $r(a)-1$ elements are ranked higher than $a$. Thus, we will call $r(a)$ the \textit{position} or \textit{rank} of $a$ in the ranking $r$. Note that two elements $a,b\in S$, $a\neq b$, can have the same position in $r$, that is we may have $r(a)=r(b)$. In this case we will say that these elements are equally ranked by $r$. In \cref{appendixWeakOrderings} we show that rankings of a set $S$ are equivalent to weak orderings on $S$.

Any bijection $r:S\to [S]$ satisfies \cref{dummyEq67}, hence it is a ranking. Such rankings will be called \textit{strict}. That is, strict rankings are such that no two elements of $S$ are equally ranked.

Given a vector $x=(x_1,\ldots ,x_d)\in \R ^d$, we denote by $rk(x)$ the unique ranking $r$ on the set $[d]=\{1,\ldots, d\}$ that satisfies $r(i)<r(j)$ if and only if $X_i>X_j$, for any $i,j\in [d]$. It is easy to check that there is indeed a unique such ranking, given by $r(i)=card\{j\in[d]:X_j>X_i\}+1$. The folk name for this map is the \textit{Standard Competition Ranking}.

We will denote by ${\cal R}={\cal R}(d)$ the set of all rankings of the set $[d]$.

\subsection{Ranking-based processes}
\label{sectionFormulation}

Let $(\Omega ,{\cal F},\mathbb{P})$ be a probability space and $\{{\cal F}_n\}_{n\in\N}$ a filtration on it. Let $\nu$ be a probability distribution on $\R^d$ with finite second moments, and for each $r\in {\cal R}(d)$ let $\mu^r$ be a probability distribution on $\R ^d$, also with finite second moments. We consider a time-homogeneous Markov process $X_n\in \R ^d$, adapted to $\{{\cal F}_n\}_n$, with initial distribution $\nu$ and with the law of its increments being $\mu ^r$, where $r$ is the current ranking. More precisely, the transition kernel $\mu$ is given by
\begin{equation}
\label{dummyEq19}
    \mu(x,B)=\mu^{rk(x)}(B-x), \ \ \ x\in \R^d,\ B\in {\cal B}(\R^d),
\end{equation}
where ${\cal B}(\R^d)$ denotes the Borel $\sigma $-algebra of $\R^d$ and $B-x=\{ y\in \R^d:y+x\in B\} $ denotes the translation of $B$ by the vector $-x$. We will call such a process a ($d$-dimensional) \textit{ranking-based process}.

\Cref{dummyEq19} implies that for any $B\in {\cal B}(\R^d)$,
\begin{equation}
\label{eqCondInd1}
    \Pr{\Delta X_{n+1}\in B\midvert {\cal F}_n}=\mu^{rk(X_n)}(B)\ \ a.s.,
\end{equation}
where $\Delta X_{n+1}=X_{n+1}-X_n$. In particular, the process is space-homogeneous within subsets of $\R^d$ that correspond to a fixed ranking, that is subsets of the form $\{x\in \R^d:rk(x)=r\}$, for $r\in {\cal R}(d)$ (but it is not space-homogeneous in general). \Cref{eqCondInd1} also implies that, conditioned on the ranking at time $n$, $\Delta X_{n+1}$ is independent of ${\cal F}_n$, that is
\begin{equation}
\label{eqCondInd2}
\Delta X_{n+1}\underset{rk(X_n)}{\independent} {\cal F}_n.
\end{equation}

We will use the shorthand notation $\mu^r(x_i>x_j)$ to mean $\mu^r(\{x\in\R^d:x_i\neq x_j\})$ and similarly for other events that involve comparisons of components of $x$.

For each $r\in{\cal R}$, we denote by $Z^r=(Z^r_1,\ldots ,Z^r_d)$ a random variable with distribution $\mu ^r$. This will be especially useful when considering differences of the form $\Delta X^i_{n+1}-\Delta X^j_{n+1}$, whose distribution cannot be directly expressed via $\mu^r$. Note that conditioned on $rk(X_n)$, $\Delta X^i_{n+1}-\Delta X^j_{n+1}$ has the distribution of $Z_i^{rk(X_n)}-Z_j^{rk(X_n)}$, that is, for each $B\in{\cal B}(\R^d)$
\begin{equation}
\label{eqZ}
    \Pr{\Delta X^i_{n+1}-\Delta X^j_{n+1}\in B\midvert rk(X_n)}=\Pr{Z_i^{rk(X_n)}-Z_j^{rk(X_n)}\in B}.
\end{equation}

We denote by $q^r_i$ the mean and by $\sigma ^r_i$ the standard deviation of the $i$-th component of the distribution $\mu ^r$, that is $q^r_i=\E\left[Z^r_i\right]$ and $\sigma ^r_i=\sqrt{Var(Z^r_i)}$. We will also use the vector notation $q^r=(q^r_1,\ldots ,q^r_d)$ for the mean.

For the rest of the paper, we fix $d\in\N$ and a $d$-dimensional ranking based process $X_n$ adapted to $\{{\cal F}_n\}_n$, with the associated $\mu ^r$'s, $\mu$, $Z^r$'s, $q^r_i$'s and $\sigma ^r_i$'s. Strictly speaking, the Markov process $X$ is described by the pair $(\nu, \mu )$. However, we will often abuse terminology and talk about a single process $X$ while allowing the initial distribution $\nu$ to vary. We will use the notation $\mathbb{P}_{\nu}$ for probabilities of events that depend on the initial distribution. The subscript $\nu$ will often be omitted for expressions that do not depend on the initial distribution (as in \cref{eqZ}). Both the distributions $\mu ^r$ and initial distribution $\nu$ will always be assumed to have finite second moments.

We will also suppress the integer $d$ in the notation for the set of all rankings of $[d]$ and write ${\cal R}={\cal R}(d)$.

\subsection{Convergence of ranking}
\label{sectionSettling}

As $n\to \infty$, the ranking of $X_n$ may keep changing or it might converge to some particular ranking $r\in {\cal R}$ (where ${\cal R}$ is endowed with the discrete topology). We have the following definition.

\begin{definition}
\label{defSettling2}
Let $X$ be a ranking-based process. We say that $rk(X_n)$ converges to $r \in {\cal R}$ and write $rk(X_n)\to r$ (or $\limn rk(X_n)=r$) , if $rk(X_n)=r$ for all sufficiently large $n$, that is
\begin{equation}
    \{rk(X_n)\to r\}=\cupdu{n_0\in\N}{}\,\capdu{n=n_0}{\infty}\{rk(X_n)=r\}=\underset{n}{\liminf}\{rk(X_n)=r\} .
\end{equation}
We say that a ranking $r \in {\cal R}$ is terminal (for the transition kernel $\mu$), if there exists some initial distribution $\nu$, such that
\begin{equation}
\label{eqDefinitionTerminal}
    \Prd{\nu}{rk(X_n)\to r}>0.
\end{equation}
Otherwise, we say that $r$ is transient.
\end{definition}

Knowing that the ranking converges is useful, because then we can predict the long-term behavior of the process (see \cref{sectionLimitTheorems}). We will therefore seek conditions under which the ranking is guaranteed to converge. 

As a first step, we ask the following question: if we know that $X^i_{n_0}>X^j_{n_0}$ occurs for some $n_0\in\N$, is it likely that $X^i_n>X^j_n$ for all $n>n_0$? The following definition and proposition give a sufficient condition for the probability of this event to be positive and bounded away from zero.

\begin{definition}
\label{defDominance2}
Let $\{X_n\}_{n\in\N}$ be a $d$-dimensional ranking-based process with the associated distributions $\mu^r$ and means $q^r_1,\ldots,q^r_d$, and let $i,j\in [d]$.
\begin{itemize}
    \item We say that $i$ quasi-dominates $j$, if for any ranking $r$ such that $r(i)<r(j)$ we have either $q^r_i>q^r_j$ or $\mu^r(x_i\neq x_j)=0$.
    \item We say that $i$ dominates $j$ if we further have that for any ranking $r$ such that $r(i)=r(j)$, either $\mu^r(x_i>x_j)>0$ or $\mu^r(x_i\neq x_j)=0$.
\end{itemize}
\end{definition}

Note the relation between quasi-dominance and the (loosely defined) concept of rich-get-richer dynamics: if $i$ quasi-dominates $j$, then $X^i_n$ increases on average faster than $X^j_n$ whenever it is already larger (or they vary in exactly the same way). The extra condition for dominance says that $X^i_n$ has a non-zero probability of passing ahead after a tie (or, again, the two components vary in exactly the same way).

\begin{proposition}
\label{lemmaProbaNeverChange}
Let $i,j\in [d]$.
\begin{enumerate}
    \item If $i$ quasi-dominates $j$, then there exists some $\epsilon>0$ such that for any initial distribution $\nu$ and any ${\cal F}_n$-stopping time $s$, we have
    \begin{equation}
    \label{dummyEq75}
        \Prd{\nu}{\capdu{n=s}{\infty }\left\{ X^{i}_{n}>X^{j}_{n}\right\}\midvert {\cal F}_{s}} \geq \epsilon\ \  \text{ a.s. on }\{s<\infty \}\cap \{X^i_{s}>X^j_{s}\}.
    \end{equation}
    \item If $i$ dominates $j$, we further have
    \begin{equation}
        \Prd{\nu}{\capdu{n=s}{\infty }\left\{ X^{i}_{n}\geq X^{j}_{n}\right\}\midvert {\cal F}_{s}} \geq \epsilon\ \  \text{ a.s. on }\{s<\infty \}\cap \{X^i_{s}\geq X^j_{s}\}.
    \end{equation}
\end{enumerate}
\end{proposition}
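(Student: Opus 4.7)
The plan is to reduce the proposition to a classical positive-drift random walk estimate by focusing on the scalar difference $D_n := X^i_n - X^j_n$. On the event $\{X^i_n > X^j_n\}$ the ranking $r := rk(X_n)$ satisfies $r(i) < r(j)$, so by quasi-dominance either $\mu^r(x_i \neq x_j) = 0$ (in which case $\Delta D_{n+1} = Z^r_i - Z^r_j \equiv 0$) or $q^r_i > q^r_j$ strictly. Taking the minimum over the finitely many rankings in $\{r : r(i) < r(j),\ \mu^r(x_i \neq x_j) > 0\}$ gives a uniform drift lower bound $\delta > 0$, and the finite-second-moment assumption yields a uniform variance bound $\sigma^2 := \max_r \mathrm{Var}(Z^r_i - Z^r_j) < \infty$. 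Thus whenever $D_n > 0$ and actually moves, it has conditional mean at least $\delta$ and conditional variance at most $\sigma^2$.

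Let $\tau := \inf\{n > s : D_n \leq 0\}$. The claim is equivalent to a uniform lower bound $\mathbb{P}_\nu(\tau = \infty \mid \mathcal{F}_s) \geq \epsilon$ on $\{s < \infty,\ D_s > 0\}$. By restricting attention to the subsequence of times at which the ranking lies in $\{r : q^r_i > q^r_j,\ r(i) < r(j)\}$ (the other times leave $D_n$ unchanged a.s.\ and are irrelevant to the event in question), I reduce the problem to bounding from below the probability that an adapted random walk with conditional drift $\geq \delta$ and conditional variance $\leq \sigma^2$ stays strictly positive starting from any $D_s > 0$.

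For an i.i.d.\ random walk with positive mean and finite variance this bound is classical (via Spitzer's identity / ladder-height theory): the probability of staying strictly positive from any starting value $x > 0$ is at least $\mathbb{P}(\inf_{n \geq 1} S_n \geq 0) > 0$, which is independent of $x$. To transfer this to the adapted setting I would use the Doob decomposition $D_n - D_s = A_n + M_n$ (with $A_n$ non-decreasing, $A_n \geq \delta \cdot (\text{number of non-trivial steps})$ on $\{\tau > n\}$, and $M_n$ a square-integrable martingale with $\mathbb{E}[M_n^2 \mid \mathcal{F}_s] \leq (n-s)\sigma^2$), combined with optional stopping at a two-sided exit time $\tau \wedge \sigma_K$, where $\sigma_K := \inf\{n > s : D_n \geq K\}$, then let $K \to \infty$ using positive drift to rule out escape without first hitting level $K$.

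The main obstacle is exactly this transfer from the i.i.d.\ result to the adapted setting under only second-moment control. With exponential moments one would choose $\alpha > 0$ small so that $e^{-\alpha D_n}$ is a supermartingale and conclude $\mathbb{P}(\tau < \infty \mid \mathcal{F}_s) \leq e^{-\alpha D_s}$ by Ville's inequality, but under only $L^2$ the exponential Lyapunov function is unavailable and the argument requires either careful truncation of large jumps or a direct Chebyshev-type estimate on the stopped martingale combined with the compensator bound. For the dominance version, the same scheme applies to $\tau' := \inf\{n > s : D_n < 0\}$; the extra hypothesis in dominance (that $\mu^r(x_i > x_j) > 0$ whenever $r(i) = r(j)$) supplies a uniform positive lower bound on the probability of transitioning from $\{D_n = 0\}$ into $\{D_{n+1} > 0\}$ in one step, after which the strong Markov property reduces to the first part.
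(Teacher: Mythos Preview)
Your overall architecture matches the paper's: reduce to the scalar difference $D_n = X^i_n - X^j_n$, stop at the first time it goes nonpositive, observe that on $\{D_n>0\}$ the increment distribution either has strictly positive mean or is identically zero, and for part~(2) use the dominance condition to get a uniform lower bound on the one-step probability of moving from a tie into $\{D_{n+1}>0\}$, then invoke part~(1) via the strong Markov property. That skeleton is exactly right, and your treatment of part~(2) is essentially the paper's.

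The substantive divergence is in how you handle the core step ``positive-drift walk stays nonnegative with probability bounded away from zero.'' You propose a general adapted-process argument (Doob decomposition, two-sided exit, Chebyshev/truncation), and you correctly identify this as the main obstacle under only $L^2$ control. The paper sidesteps this difficulty entirely by exploiting a structural fact you did not use: the increment distribution of $\Delta D_{n+1}$ is drawn from a \emph{finite} family $\{\nu^r : r\in\mathcal{R}\}$, not merely from a class with uniform drift and variance bounds. This allows a coupling (the paper's Lemma~2.7): introduce independent i.i.d.\ sequences $\{U^r_k\}_{k}$ with $U^r_k\sim\nu^r$, and observe that the event $\{Y_n\ge 0\ \forall n\}$ contains the intersection over $r$ of the events that each i.i.d.\ partial-sum process $\sum_{k\le n} U^r_{\tau^r_k}$ stays nonnegative. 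Those events are independent across $r$, and each has positive probability by the classical i.i.d.\ result you already cited. No Doob decomposition, no truncation, no Lyapunov function is needed.

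So your route is not wrong in principle---the paper even notes that a related result can be obtained by Lyapunov-function methods (\cite[Th.~2.5.12]{menshikov2016non})---but it is harder, and your proposal leaves the hardest part as a sketch. The missing idea is that finiteness of $\mathcal{R}$ lets you decouple the contributions of different rankings and reduce directly to the i.i.d.\ case, which gives the uniform $\epsilon$ for free.
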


For a concrete case, if we take the a.s. constant stopping time $s=n_0$, then \cref{dummyEq75} implies in particular that
\begin{equation}
    \Pr[\nu]{\capdu{n=n_0}{\infty}\{X^i_{n}>X^j_{n}\}\midvert X^i_{n_0}>X^j_{n_0}}\geq \epsilon ,
\end{equation}
whenever the expression on the left hand side makes sense (i.e. whenever $\Pr[\nu]{X^i_{n_0}>X^j_{n_0}}>0$).

We postpone the proof in order to get to our main result for this section. For ease of reference we state the condition for that theorem separately:

\begin{assumption}[Ranking-based reinforcement]
\label{qualityOrderingAssumption}
    For any pair of indices $i,j\in [d]$, either one of them dominates the other, or they quasi-dominate each other.
\end{assumption}

Note that it is possible for both $i$ and $j$ to dominate each other; the above assumption would still be satisfied. This means that the ``dominance'' relation does not have to be trichotomous. It does not have to be transitive either. However, a transitive trichotomous relation (i.e. a strict total order) on $[d]$ would satisfy \cref{qualityOrderingAssumption}.

\begin{example}
\label{example:additivePolyaUrn}
Let $X_n=(X^1_n,\ldots X^d_n)$ give the number of balls of each of $d$ colors in an urn. At each time step, a single ball is added, with probabilities for each color depending on the ranking. Note that in this case $q^r_i$ is equal to the probability of adding a ball of color $i$ when the ranking is $r$ (see also first paragraph of \cref{sectionPolyaUrn}). These probabilities will be determined as follows: Each color has a propensity $a_i\geq 0$ to be chosen. Moreover, there are real numbers $\lambda _1>\cdots >\lambda _d\geq 0$, with $\lambda _i$ denoting an additive bonus to the propensity of the color(s) currently ranked $i$-th. More specifically, the probability of adding a ball of color $i$, given that the current ranking is $r$, is
\begin{equation}
\label{eqDummy51}
    q^r_i=\frac{a_i+\lambda _{r(i)}}{\sumdu{j=1}{d}\left(a_j+\lambda _{r(j)}\right)}.
\end{equation}
We claim that this process satisfies \cref{qualityOrderingAssumption}. To see this, let $i,j\in [d]$ and suppose without loss of generality that $a_i\geq a_j$. We have the following cases:
\begin{itemize}
    \item $a_i=a_j$: By \cref{eqDummy51} we have that $q^r_i>q^r_j$ whenever $i$ is ranked higher than $j$ and vice versa. That is, $i$ and $j$ quasi-dominate each other.
    \item $a_i>a_j$: We similarly get that color $i$ quasi-dominates color $j$. Moreover, when $i$ and $j$ are ranked equally (i.e. $r(i)=r(j)$), \cref{eqDummy51} gives $q^r_i>q^r_j$, that is it is more likely for color $i$ to be chosen. This shows that $i$ dominates $j$.
\end{itemize}
Thus our claim is proved.
\end{example}

We now state and prove our main theorem for this section.

\begin{theorem}[Convergence of ranking]
\label{settlingTheorem}
Let $X_n$ be a ranking-based process satisfying \cref{qualityOrderingAssumption}. Then, $rk(X_n)$ converges a.s., for any initial distribution $\nu$.
\end{theorem}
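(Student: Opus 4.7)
The ranking $rk(X_n)$ is a function only of the pairwise comparisons $\mathrm{sign}(X^i_n - X^j_n)$ for $i, j \in [d]$. Since there are only finitely many such pairs, it suffices to prove that for each pair the comparison is eventually constant almost surely; on the intersection of these finitely many almost sure events all pairwise comparisons are stable, and so $rk(X_n)$ is eventually constant.

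Fix $i, j \in [d]$ and suppose first that $i$ quasi-dominates $j$. I would iteratively define stopping times counting how many times the process re-enters $\{X^i_n > X^j_n\}$ after having left it: let $\sigma_0 = \inf\{n : X^i_n > X^j_n\}$, $\tau_k = \inf\{n > \sigma_{k-1} : X^i_n \leq X^j_n\}$, and $\sigma_k = \inf\{n > \tau_k : X^i_n > X^j_n\}$. Each $\sigma_k$ is a stopping time with $X^i_{\sigma_k} > X^j_{\sigma_k}$ on $\{\sigma_k < \infty\}$, so Proposition~\ref{lemmaProbaNeverChange}(1) produces a single $\epsilon > 0$ (depending only on $i, j$) with
\begin{equation*}
\Prd{\nu}{\capdu{n=\sigma_k}{\infty}\{X^i_n > X^j_n\} \midvert {\cal F}_{\sigma_k}} \geq \epsilon \quad \text{a.s.\ on } \{\sigma_k < \infty\}.
\end{equation*}
The event on the left is incompatible with $\{\sigma_{k+1} < \infty\}$, so conditioning and iterating yields $\Prd{\nu}{\sigma_k < \infty} \leq (1-\epsilon)^k$, hence $\sigma_k$ is eventually infinite almost surely. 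In other words, the process leaves $\{X^i_n > X^j_n\}$ only finitely many times.

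To rule out infinite oscillation among $>$, $<$, $=$ for the pair $(i,j)$, I would invoke Assumption~\ref{qualityOrderingAssumption}. If $i$ and $j$ quasi-dominate each other, an entirely symmetric argument (swapping the roles of $i$ and $j$) shows that $\{X^i_n < X^j_n\}$ is also left only finitely often, and combining the two conclusions forces exactly one of $X^i_n > X^j_n$, $X^i_n = X^j_n$, or $X^i_n < X^j_n$ to hold for all large $n$. If instead (say) $i$ dominates $j$ without $j$ quasi-dominating $i$, I would repeat the stopping-time construction for the event $\{X^i_n \geq X^j_n\}$, this time invoking Proposition~\ref{lemmaProbaNeverChange}(2), to deduce that this event is also exited only finitely often; intersected with the first conclusion, the only remaining possibilities are again $>$, $=$, or $<$ eventually. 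The symmetric case where $j$ dominates $i$ is identical. Intersecting the resulting almost sure events over the finitely many pairs concludes the proof.

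The main obstacle is already encapsulated in Proposition~\ref{lemmaProbaNeverChange}, whose uniform lower bound $\epsilon$ (independent of the stopping time and of $k$) drives the whole geometric decay; granted that bound, the rest is a conditional Borel--Cantelli-style iteration. The only genuine subtlety in the plan itself is that handling ties requires both parts of the proposition and the case split along the two alternatives offered by Assumption~\ref{qualityOrderingAssumption}.
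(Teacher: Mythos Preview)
Your proposal is correct and follows essentially the same route as the paper's proof: the same reduction to pairwise comparisons, the same alternating stopping-time construction feeding into Proposition~\ref{lemmaProbaNeverChange} to get a geometric decay $\Prd{\nu}{\sigma_k<\infty}\leq(1-\epsilon)^k$, and the same case split along the two alternatives of Assumption~\ref{qualityOrderingAssumption} (mutual quasi-dominance handled by symmetry and part~(1), one-sided dominance handled by rerunning the argument for $\{X^i_n\geq X^j_n\}$ via part~(2)). The only cosmetic difference is that the paper indexes on the exit times $s_m$ rather than your re-entry times $\sigma_k$, but the two are interleaved and the bounds are equivalent.
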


\begin{proof}
It is enough to show that for each pair of indices $i,j$, the relative ranking of $X^i_n$ and $X^j_n$ eventually stops changing with probability $1$. So let $i\neq j$ and, without loss of generality, assume that $i$ quasi-dominates $j$ (see \cref{qualityOrderingAssumption}). Define $s_0=0$ and inductively $t_m=\inf \{ n>s_{m-1}:X^i_n>X^j_n\} $ and $s_m=\inf \{ n>t_m:X^i_n\leq X^j_n\} $. Notice that $\{s_m=\infty\}=\capdu{n=t_m}{\infty }\{X^i_n>X^j_n\}$. Therefore, \cref{lemmaProbaNeverChange} applied to $s=t_m$ implies that there exists some $\epsilon$, not depending on $m$, such that
\begin{equation}
\label{dummyEq46}
\begin{aligned}
    \Prd{\nu}{s_m=\infty \midvert {\cal F}_{t_m}} & \geq \epsilon>0,\ \ a.s.
\end{aligned}
\end{equation}
on $\{t_m<\infty,X^i_{t_m}>X^j_{t_m}\}=\{t_m<\infty \}$. In particular, if $\Pr[\nu]{t_m<\infty }>0$, then
\begin{equation}
    \Pr[\nu]{s_m=\infty\midvert t_m<\infty } \geq \epsilon
\end{equation}
and
\begin{equation}
\label{eqDummy49}
\begin{aligned}
    \Prd{\nu}{s_m<\infty } & =\Prd{\nu}{\{ t_m<\infty \} \cap \{ s_m<\infty \}} \\ &
    =\Prd{\nu}{t_m<\infty}\cdot \Prd{\nu}{s_m<\infty \midvert t_m<\infty \}}\\
    & \leq (1-\epsilon )\cdot \Prd{\nu}{t_m<\infty} \\
    & \leq (1-\epsilon )\cdot \Prd{\nu}{ s_{m-1}<\infty}.
\end{aligned}
\end{equation}
Although we have assumed $\Pr[\nu]{t_m<\infty }>0$, \cref{eqDummy49} continues to hold even if $\Pr[\nu]{t_m<\infty }=0$, because then $\Pr[\nu]{s_m<\infty }=0$ as well.

By \cref{eqDummy49} and induction we have $\Prd{\nu}{s_m<\infty }\leq \left(1-\epsilon \right) ^m$, therefore
\begin{equation}
    \Pr[\nu]{\capdu{m\in\N}{\infty }\left\{ s_m<\infty \right\}}=0. 
\end{equation}

Hence, with probability $1$, either $X^i_n\leq X^j_n$ finitely often (henceforth abbreviated f.o.) or $X^i_n>X^j_n$ f.o. If $X^i_n\leq X^j_n$ f.o., then $X^i_n>X^j_n$ for all sufficiently large $n$, so we are done. Now assume that $X^i_n>X^j_n$ f.o. and separate two cases, according to \cref{qualityOrderingAssumption}:
\begin{itemize}
    \item $j$ also quasi-dominates $i$: We get similarly that either $X^i_n\geq X^j_n$ f.o. or $X^i_n<X^j_n$ f.o. As before, in the first case we are done. In the second case, we have both $X^i_n<X^j_n$ and $X^i_n>X^j_n$ f.o., so that $X^i_n=X^j_n$ for all sufficiently large $n$.
    \item $i$ dominates $j$: Using the second part of \cref{lemmaProbaNeverChange} we get that either $X^i_n<X^j_n$ f.o. or $X^i_n\geq X^j_n$ f.o. The situation is identical as in the first case.
\end{itemize}
\end{proof}

We now turn to the proof of \cref{lemmaProbaNeverChange}. We will need the following lemma, which generalizes a property of biased random walks to the case that the transition probabilities are not constant, but vary in a finite set. Its proof is given in the Appendix. A related result is obtained in \citep[Th. 2.5.12]{menshikov2016non} by different methods.

\begin{lemma}
\label{lemmaBiasedRandomWalk}
Let $(\Omega , {\cal G},\mathbb{P})$ be a probability space. Let $S$ be a finite set and for each $r\in S$, $\nu^r$ a distribution on $\R$ such that it either has positive mean or $\nu^r(\{0\})=1$. Let $\{ R_n\} _{n\in\N}$ be a sequence of random elements in $S$ and $\{ Y_n\} _{n\in \N }$ a sequence of random variables with $Y_0=0$. Suppose that $\Delta Y_{n+1}$ is conditionally independent of $\{(Y_k,R_k)\}_{k\leq n}$ conditioned on $R_n$, with distribution $\nu ^{R_n}$. In other words, for any $A\in {\cal B}(\R)$, $n\in\N$,
\begin{equation}
\label{eqDummy93}
    \Pr{\Delta Y_{n+1}\in A\midvert \{(Y_k,R_k)\}_{k\leq n}}=\nu^{R_n}(A)\ \ a.s.
\end{equation}
Then,
\begin{equation}
    \Pr{\capdu{n\in\N}{}\{Y_n\geq 0\}}\geq \epsilon >0,
\end{equation}
where $\epsilon$ depends only on the distributions $\nu^r$, $r\in S$.
\end{lemma}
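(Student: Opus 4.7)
The plan is to combine a uniform Doob-type tail bound for the walk with a direct ``boost-then-coast'' construction that handles the boundary at $0$. I first reduce to the case in which every $\nu^r$ has strictly positive mean: trivial steps (those with $\nu^r=\delta_0$) leave $Y_n$ unchanged, so $\bigcap_n\{Y_n\geq 0\}$ coincides with the analogous event for the subsequence of $Y$ at the nontrivial jump times, which is itself a walk of the same type; and if no nontrivial $\nu^r$ exists then $Y_n\equiv 0$ and we are done. Write $\delta>0$ and $\sigma^2<\infty$ for the minimum mean and maximum variance respectively of the (finitely many) nontrivial $\nu^r$, the finiteness of $\sigma^2$ being automatic under the paper's blanket second-moment assumption.

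The core estimate I aim for is: for every $y>0$,
\[
\Pr{Y_n\geq -y \text{ for all } n}\geq 1-C/y,
\]
where $C$ depends only on $\delta$ and $\sigma^2$. I would establish it via the Doob decomposition $Y_n=A_n+M_n$, with $A_n\geq n\delta$ predictable increasing and $M_n$ a martingale satisfying $\E[M_n^2]\leq n\sigma^2$. Since $\{Y_n<-y\}\subseteq\{M_n<-y-n\delta\}$, partitioning the time axis into dyadic windows $[2^k,2^{k+1})$ and applying Doob's $L^2$ maximal inequality on each window gives a summable tail bound, with the range of $k$ split at $2^k\delta\approx y$ so that the contributions from both regimes are controlled. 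Crucially, this bound is uniform in the law of the $R$-process, so the very same inequality holds for any walk restarted at a stopping time.

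To pass from ``$\geq -y$'' to ``$\geq 0$'', I use that positive mean plus finiteness of $S$ yields constants $c,\eta>0$ such that $\nu^r((c,\infty))\geq\eta$ for every $r$. Fix $y^*$ large enough that $1-C/y^*\geq 1/2$ and set $N:=\lceil y^*/c\rceil$. Iterated conditioning gives $\Pr{\Delta Y_k>c \text{ for all } 1\leq k\leq N}\geq\eta^N$, and on this event $Y_n\geq nc\geq 0$ for $n\leq N$ and $Y_N\geq y^*$. The shifted process $(Y_{N+m}-Y_N, R_{N+m})_{m\geq 0}$ satisfies the same conditional-independence hypothesis as the original, so the core estimate applied conditionally on ${\cal F}_N$ yields $\Pr{Y_{N+m}-Y_N\geq -y^*\ \forall\, m\midvert {\cal F}_N}\geq 1/2$; on this event $Y_{N+m}\geq 0$ for all $m$. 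Combining gives $\Pr{\bigcap_n\{Y_n\geq 0\}}\geq \tfrac{1}{2}\eta^N$, a lower bound depending only on $\{\nu^r\}_{r\in S}$.

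The main difficulty lies in the core estimate: the dyadic chunking has to be tuned so that both regimes $2^k\delta\lesssim y$ and $2^k\delta\gtrsim y$ each contribute only $O(1/(y\delta))$ to the resulting geometric series; a naive union bound over all $n$ diverges. Once this estimate is in hand, the boost-and-coast step is essentially bookkeeping, but it hinges on the uniformity of the bound in the $R$-process, so that the very same inequality can be applied verbatim to the walk restarted at time $N$.
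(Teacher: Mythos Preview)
Your argument is correct and takes a genuinely different route from the paper's.

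The paper proceeds by an explicit coupling: it realises the walk as $Y'_{n+1}=Y'_n+U^{R_n}_n$ with $\{U^r_n\}_{r,n}$ independent across both indices, then observes that each partial sum $Y'_n$ is a sum, over $r\in S$, of the first few terms of the i.i.d.\ sequence $\{U^r_{\tau^r_k}\}_k$, where $\tau^r_k$ is the $k$-th visit of $R$ to state $r$. Hence $\bigcap_n\{Y'_n\geq 0\}\supset\bigcap_{r\in S}\bigcap_n\{\sum_{k\leq n}U^r_{\tau^r_k}\geq 0\}$; after a reindexing trick to dispose of states visited only finitely often, these $|S|$ events are independent and each has positive probability by the classical fact for positive-drift i.i.d.\ walks. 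The resulting bound is $\epsilon=\prod_{r\in S}\Pr{\inf_n\sum_{k\leq n}U^r_k\geq 0}$.

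Your approach trades this structural decomposition for a quantitative martingale estimate (Doob decomposition plus dyadic $L^2$ maximal inequality) followed by a boost-and-coast construction. What this buys you is an explicit $\epsilon=\tfrac12\eta^N$ in terms of the minimum drift $\delta$, maximum variance $\sigma^2$, and the tail constants $c,\eta$; what it costs is the finite-second-moment hypothesis, which the paper's argument does not need (the lemma as stated does not assume it, though you are right that the paper's standing assumptions supply it wherever the lemma is invoked). The paper's per-state decomposition is arguably slicker and more robust---it works under first moments only---while yours is more self-contained and yields sharper control of $\epsilon$.

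Two small points to tighten. First, your reduction to the all-nontrivial case needs one line verifying that the subsequence at nontrivial jump times still satisfies the conditional-independence hypothesis (this is a stopping-time extension of the stated hypothesis), and you should say what happens when that subsequence is finite (then $Y$ is eventually constant and the event is decided by finitely many terms). Second, when you apply the core estimate to the shifted walk conditionally on $\mathcal{F}_N$, you are implicitly using that the constant $C$ depends only on $\delta,\sigma^2$ and not on the joint law of $(R_n)$---worth stating, since it is exactly the uniformity that makes the restart legitimate.
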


We note that if $|S|=1$, then \cref{lemmaBiasedRandomWalk} reduces to the well-known result that a biased one-dimensional random walk with positive mean has positive probability of never admitting negative values (see \cite[Corollary 9.17]{kallenberg2006foundations}).

\begin{proof}[Proof of \cref{lemmaProbaNeverChange}]
\begin{enumerate}
    \item Let $s$ and $\nu $ be given and define $\tau = \min\{n\geq s:X^i_n\leq X^j_n\}$ and
    \begin{equation}
        Y_n=X^i_{\tau \wedge n}-X^j_{\tau \wedge n}
    \end{equation}
    Note that $Y_n>0$ for all $n\geq s$ implies $X^i_n>X^j_n$ for all $n\geq s$. Therefore, it is enough to show that, for some $\epsilon >0$ that does not depend on $s$ or $\nu$,
    \begin{equation}
    \label{eqDummy35}
    \Prd{\nu}{\capdu{n=s}{\infty }\left\{ Y_{n}>0\right\}\midvert {\cal F}_{s}} \geq \epsilon \ \ \text{ a.s. on }\{Y_{s}>0\}.
    \end{equation}
    We have
    \begin{equation}
    \label{eqDummy37}
        \Delta Y_{n+1}=\mathbf{1}_{\tau >n}\cdot (\Delta X^i_{n+1}-\Delta X^j_{n+1}),
    \end{equation}
    where $\mathbf{1}_A$ denotes the indicator function of the set $A$. It follows that conditioned on $rk(X_n)$ and $\mathbf{1}_{\tau >n}$, $\Delta Y_{n+1}$ is independent of ${\cal F}_n$ (see \cref{eqCondInd1}). Moreover, its conditional distribution is equal to that of $Z^{rk(X_n)}_i-Z^{rk(X_n)}_j$ in the case $\tau >n$ (by \cref{eqZ}), while $\Delta Y_{n+1}=0$  identically otherwise.

    Let $F\in {\cal F}_s$ be any event with $\Pr{F}>0$ and consider the probability measure $\Pr[\nu,F]{\cdot }=\Pr[\nu]{\cdot\midvert F}$. We apply \cref{lemmaBiasedRandomWalk} for this measure and the sequence $\{Y_{n+s}-Y_{s}\}_{n\in\N}$, with $S={\cal R}\sqcup \{\alpha \}$ (where $\alpha$ is an arbitrary new element) and
    \begin{equation}
    R_n=
    \begin{cases}
        rk(X_{n+s}), & \text{ if } \tau>n+s,\\
        \alpha, & \text{ otherwise,}
    \end{cases}
    \end{equation}
    The distributions $\nu ^r$ in \cref{lemmaBiasedRandomWalk} are equal to the distributions of $Z^r_i-Z^r_j$ for $r\in {\cal R}$, while $\nu ^{\alpha }$ is the singular probability measure satisfying $\nu^{\alpha}(\{0\})=1$. \Cref{lemmaBiasedRandomWalk} thus gives
    \begin{equation}
    \Prd{\nu}{\capdu{n\in\N}{}\left\{ Y_{n+s}-Y_{s}\geq 0\right\}\midvert F} \geq \epsilon>0,
    \end{equation}
    where $\epsilon $ depends only on the $\mu ^r$'s (distributions of $Z^r$'s). Since $F$ was arbitrary, we get
    \begin{equation}
    \Prd{\nu}{\capdu{n\in\N}{}\left\{ Y_{n+s}-Y_{s}\geq 0\right\}\midvert {\cal F}_{s}} \geq \epsilon\ \ a.s.,
    \end{equation}
    from where \Cref{eqDummy35} follows.

\item Let $\tau '=\inf\, \left\{n\geq s: X^i_n\neq X^j_n\right\}$. We may assume that $X^i_s=X^j_s$ and $\tau'<\infty $, since on $\{X^i_s>X^j_s\}$ part (a) applies, while on $\{\tau'=\infty \}$ the result holds trivially. On $\{X^i_s=X^j_s,\tau'<\infty \}$ we have $\capdu{n=s}{\infty }\left\{ X^{i}_{n}\geq X^{j}_{n}\right\}=\capdu{n=\tau '}{\infty }\left\{ X^{i}_{n}\geq X^{j}_{n}\right\}$ and $\tau'\geq s+1$, it is therefore enough to show that
\begin{equation}
\label{eqDummy71}
    \Pr[\nu]{\capdu{n=\tau'}{\infty }\left\{ X^{i}_{n}> X^{j}_{n}\right\}\midvert {\cal F}_{\tau'-1}}\geq \epsilon\ \ a.s.
\end{equation}
or, by part (a),
\begin{equation}
\label{eqDummy73}
    \Pr[\nu]{X^{i}_{\tau'}> X^{j}_{\tau'}\midvert {\cal F}_{\tau'-1}}\geq \epsilon'\ \ a.s.,
\end{equation}
for some $\epsilon'>0$ that does not depend on $\nu $ or $s$.

Let $R'=\{r\in {\cal R}:r(i)=r(j),\mu^r(x_i>x_j)>0\}$ be the subset of rankings that rank $i$ and $j$ equally, but they give positive probability to $i$ to pass ahead on the next step. Since by assumption all other rankings with $r(i)=r(j)$ satisfy $\mu^r(x_i\neq x_j)=0$, $rk(X_n)$ must take a value in $R'$ before we can have $X^i_n\neq X^j_n$. That is, $rk(X_{\tau '-1})\in R'$ a.s., hence also
\begin{equation}
\begin{aligned}
    \Pr{X^i_{\tau'}>X^j_{\tau'}\midvert {\cal F}_{\tau'-1}} & =\Pr{\Delta X^i_{\tau'}>\Delta X^j_{\tau'}\midvert {\cal F}_{\tau'-1}}\\
    & =\mu^{rk(X_{\tau'-1})}(x_i>x_j)\\
    & \geq \underset{r\in R'}{\min }\, \mu^r(x_i>x_j)>0\ \ a.s.,
\end{aligned}
\end{equation}
where the second equality follows from \cref{eqCondInd1}.
\end{enumerate}
\end{proof}

\subsection{Terminal rankings}
\label{sectionLimitRankings}

\cref{settlingTheorem} says that \cref{qualityOrderingAssumption} guarantees convergence of $rk(X_n)$, but it doesn't say anything about the possible limits. In this section we deal with the question of what the possible limit rankings are. Recall that a ranking is terminal if $\Prd{\nu}{rk(X_n)\to r}>0$ for some probability distribution $\nu$ (\cref{defSettling2}). Our main result in this section is the following:

\begin{theorem}[Terminal rankings]
\label{theoremTerminalRankings}
Let $X_n$ be a $d$-dimensional ranking-based process with the associated distributions $\mu^r$ and means $q^r_1,\ldots ,q^r_d$. A ranking $r$ is terminal if and only if, for any $i,j\in [d]$:
\begin{itemize}
    \item If $r(i)=r(j)$ then $\mu^r(x_i\neq x_j)=0$.
    \item If $r(i)<r(j)$ then either $q^r_i>q^r_j$ or $\mu^r(x_i\neq x_j)=0$.
\end{itemize}
\end{theorem}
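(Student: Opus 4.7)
The plan is to prove both directions by reducing the question to the behavior of an i.i.d.\ random walk with step distribution $\mu^r$. Starting the process at $x^0$ with $rk(x^0)=r$, space-homogeneity inside $\{x:rk(x)=r\}$ together with the Markov property imply that, as long as the ranking of $X_n$ remains $r$, we have $X_n=x^0+S_n$ where $S_n=Y_1+\cdots+Y_n$ and the $Y_k$ are i.i.d.\ with law $\mu^r$. Hence
\begin{equation*}
h(x^0):=\Pr[x^0]{rk(X_n)=r \text{ for all } n}=\Pr{rk(x^0+S_n)=r \text{ for all } n}.
\end{equation*}
Writing $A=\{rk(X_n)\to r\}=\bigcup_{k\in\N}\bigcap_{n\geq k}\{rk(X_n)=r\}$ and conditioning at time $k$ via the Markov property, $r$ is terminal if and only if $h(x^0)>0$ for some $x^0$ with $rk(x^0)=r$. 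Both directions can then be analyzed in terms of the one-dimensional walks $T^{ij}_n:=S^j_n-S^i_n$ for the finitely many pairs $(i,j)$.

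For sufficiency, assuming the two conditions on $r$, I would take $\nu=\delta_{x^0}$ where $x^0$ satisfies $rk(x^0)=r$ and $x^0_i-x^0_j\geq M$ whenever $r(i)<r(j)$, with $M$ to be chosen large. The first condition forces $\mu^r(x_i\neq x_j)=0$ whenever $r(i)=r(j)$, so $S^i_n=S^j_n$ a.s.\ and equalities are automatically preserved. For pairs with $r(i)<r(j)$ and $\mu^r(x_i\neq x_j)>0$, the second condition gives $q^r_i>q^r_j$, so $T^{ij}_n$ has strictly negative drift; by the SLLN, $T^{ij}_n\to-\infty$ a.s., and hence $\sup_n T^{ij}_n$ is a.s.\ finite. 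Intersecting $\{\sup_n T^{ij}_n<M\}$ over the finitely many relevant pairs and taking $M$ large enough yields $h(x^0)>0$.

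For necessity, I fix some $x^0$ with $rk(x^0)=r$ and $h(x^0)>0$ and derive both conditions by contradiction. If $r(i)=r(j)$ with $\mu^r(x_i\neq x_j)>0$, then $\Pr{Y^i_k=Y^j_k}<1$ and independence across $k$ give $\Pr{S^i_n=S^j_n \text{ for all } n}=0$; but preserving $rk=r$ requires the tie $x^0_i+S^i_n=x^0_j+S^j_n$ to persist, contradicting $h(x^0)>0$. If $r(i)<r(j)$ with $q^r_i\leq q^r_j$ and $\mu^r(x_i\neq x_j)>0$, the walk $T^{ij}_n$ has mean $q^r_j-q^r_i\geq 0$: if $Z^r_j-Z^r_i$ is a.s.\ constant, that constant must be strictly positive (forced by $\mu^r(x_i\neq x_j)>0$ and nonnegative mean), whence $T^{ij}_n\to+\infty$; otherwise $Z^r_j-Z^r_i$ is non-degenerate with mean $\geq 0$ and standard random-walk asymptotics give $\limsup_n T^{ij}_n=+\infty$ a.s. In both subcases $\{T^{ij}_n<x^0_i-x^0_j \text{ for all } n\}$ has probability zero, again contradicting $h(x^0)>0$.

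The main technical subtlety will be the critical case $q^r_i=q^r_j$ with non-degenerate $Z^r_j-Z^r_i$: the SLLN alone is uninformative here, and I must invoke the oscillation of centered i.i.d.\ random walks with finite second moment (equivalently, $\limsup_n S_n=+\infty$ a.s.\ for every non-degenerate mean-zero i.i.d.\ walk in $\R$). The remaining ingredients---the reduction to the i.i.d.\ walk via the Markov property and space homogeneity, the reduction of terminality to positivity of $h$, and the SLLN in the drifting cases---are routine, and the theorem ultimately pivots on one-dimensional random-walk asymptotics applied to each of the finitely many pairs $(i,j)$.
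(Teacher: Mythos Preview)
Your proposal is correct and takes essentially the same approach as the paper: both reduce to an i.i.d.\ random walk with step law $\mu^r$ (coupled to $X_n$ as long as the ranking stays $r$) and then analyze each pair $(i,j)$ via one-dimensional walk asymptotics---SLLN for the drifting case, recurrence/oscillation for the centered non-degenerate case. The only cosmetic difference is packaging: the paper isolates the coupling as a separate ``mimicking'' construction (\cref{lemmaMirroring}), which it reuses elsewhere, and states sufficiency as a standalone lemma (\cref{theoremLimitRankingsSufficient}) with a uniform threshold $M$, whereas you start deterministically at $x^0$ and work directly with $h(x^0)=\Pr{rk(x^0+S_n)=r\ \forall n}$.
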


Let us give some intuition behind \cref{theoremTerminalRankings}. If $rk(X_n)\to r$, then there exists some $n_0\in \N $ such that $rk(X_n)=r$ for all $n\geq n_0$, so $\Delta X_{n+1}$ is distributed according to $\mu ^r$ for all $n\geq n_0$. In particular, for any $i\in [d]$, the $\Delta X^i_{n+1}$'s behave like i.i.d. random variables with mean $q^r_i$ and finite variance, hence $X^i_{n}\slash n\to q^r_i$ (see also \cref{marketShareTheorem}). Therefore, if $r$ ranks $i$ higher than $j$, for the ranking to remain equal to $r$, we must have that $q^r_i>q^r_j$. Note that in particular $q^r_i=q^r_j$ is not enough. An exception to the latter is if $\Delta X^i_{n+1}=\Delta X^j_{n+1}$ a.s. (equivalently $\mu^r(x_i\neq x_j)=0$), so that the two components change in exactly the same way. On the other hand, if $i$ and $j$ are ranked equally, then we must necessarily have $\Delta X^i_{n+1}=\Delta X^j_{n+1}$ a.s. for the ranking not to change. The above theorem says that these conditions are not only necessary, but also sufficient for the ranking to have a positive probability to remain the same for all $n\geq n_0$.


\Cref{theoremTerminalRankings} characterizes all terminal rankings by an easy to check criterion. Note that it does not require \cref{qualityOrderingAssumption}. However, without that assumption $rk(X_n)$ is not guaranteed to converge (see \cref{settlingTheorem}). Also note that even if we know that $r$ is terminal, we don't know whether $\Prd{\nu}{rk(X_n)\to r}>0$ for a \textit{specific} initial distribution $\nu$. This is the topic of \cref{sectionInitialDistributions} (see in particular \cref{propositionLimitRankingsSufficient}).

If we can exclude the case $\mu^r(x_i\neq x_j)=0$, then we get the following simplification of \cref{theoremTerminalRankings}. 

\begin{corollary}
\label{theoremTerminalRankingsSpecial}
Suppose that $\mu^r(x_i\neq x_j)>0$ for all $i,j\in [d]$ and all $r\in {\cal R}$. Then, a ranking $r$ is terminal if and only if it is a strict ranking and
\begin{equation}
\label{eqDescendingMeans}
    q^{r}_{r^{-1}(1)}>q^{r}_{r^{-1}(2)}>\ldots >q^{r}_{r^{-1}(d)},
\end{equation}
where $r^{-1}$ denotes the inverse of $r$.
\end{corollary}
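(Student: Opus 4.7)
The plan is to derive this corollary as a direct specialization of \cref{theoremTerminalRankings} using the positivity hypothesis $\mu^r(x_i\neq x_j)>0$ for all $i\neq j$ and all $r\in {\cal R}$. Since this hypothesis rules out the ``degenerate'' alternative ($\mu^r(x_i\neq x_j)=0$) in each of the two bullets of \cref{theoremTerminalRankings}, both conditions collapse to purely mean-based requirements, and the rest is bookkeeping.

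First I would handle the equal-rank clause: the theorem requires that $r(i)=r(j)$ forces $\mu^r(x_i\neq x_j)=0$, which under the hypothesis can only hold when $i=j$. Thus a terminal $r$ separates distinct indices, hence is a bijection $[d]\to [d]$, i.e. a strict ranking. Conversely, for a strict ranking the equal-rank clause holds vacuously. Next I would handle the strict-rank clause: if $r(i)<r(j)$ then in particular $i\neq j$, so $\mu^r(x_i\neq x_j)>0$ by hypothesis, leaving $q^r_i>q^r_j$ as the only alternative. Setting $k=r(i)$ and $\ell=r(j)$ and using the inverse bijection $r^{-1}$, the condition ``$r(i)<r(j)\Rightarrow q^r_i>q^r_j$ for all $i,j\in [d]$'' becomes ``$k<\ell \Rightarrow q^r_{r^{-1}(k)}>q^r_{r^{-1}(\ell)}$ for all $k,\ell \in [d]$'', which is exactly \cref{eqDescendingMeans}. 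The converse direction follows by the same translation.

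There is no real analytic obstacle here; the only thing to be careful about is the index translation between the ``by pair'' formulation of \cref{theoremTerminalRankings} and the ``by rank position'' formulation of \cref{eqDescendingMeans}, and the observation that the positivity hypothesis is precisely what eliminates the alternative clauses in both bullets of the theorem.
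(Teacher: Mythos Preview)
Your proposal is correct and follows the same approach as the paper: both derive the corollary directly from \cref{theoremTerminalRankings} by observing that the positivity hypothesis eliminates the degenerate alternative $\mu^r(x_i\neq x_j)=0$ in each bullet, forcing $r$ to be strict and reducing the second bullet to the mean inequality, then reindexing via $r^{-1}$. Your write-up is in fact more explicit than the paper's about why strictness follows and about the index translation, but the argument is identical in substance.
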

\begin{proof}
The case $\mu^r(x_i\neq x_j)=0$ is excluded by assumption, so by \cref{theoremTerminalRankings} a ranking is terminal if and only if for any $i,j$ with $r(i)<r(j)$ we have $q^r_i>q^r_j$, or equivalently, if for any $i<j$, $q^r_{r^{-1}(i)}>q^r_{r^{-1}(j)}$.
\end{proof}

For the proof of \cref{theoremTerminalRankings} we are going to need a construction that will also be used again later on. Specifically, given a ranking-based process $X_n$ and a ranking $r\in {\cal R}$, we construct another process $Y_n$ that is identical to $X_n$ up to some point $n_0\in\N$, and it has i.i.d. increments afterwards with distribution $\mu^r$. It has the additional property that it remains equal to $X_n$ as long as their common ranking remains equal to $r$. The benefit of this is that we can work with the simpler process $Y_n$ and then transfer results to $X_n$.

\begin{lemma}
\label{lemmaMirroring}
For any $r\in {\cal R}$ and any $n_0\in\N$, there exists a process $Y_n\in\R^d$ and a filtration ${\cal G}_n\supset {\cal F}_n$ such that:
\begin{enumerate}[label=\roman*.]
    \item $Y_n=X_n$ for all $n\leq n_0$.
    \item $\{\Delta Y_{n}\}_{n\geq n_0+1}$ is a sequence of i.i.d. random vectors with distribution $\mu ^r$. Moreover, $Y_n\in {\cal G}_n$ for each $n\in\N$, and $\Delta Y_{n+1}\independent {\cal G}_{n}$ for each $n\geq n_0$.
    \item For any $n>n_0$, on both $\capdu{k=n_0}{n-1}\{rk(X_k)=r\}$ and $\capdu{k=n_0}{n-1}\{rk(Y_k)=r\}$ we have $Y_k=X_k$ a.s. for $k=0,1,\ldots ,n$. In particular, on both $\capdu{k=n_0}{\infty }\{rk(X_k)=r\}$ and $\capdu{k=n_0}{\infty }\{rk(Y_k)=r\}$ we have $Y_n=X_n$ a.s. for all $n\in\N$.
\end{enumerate}
\end{lemma}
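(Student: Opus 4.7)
The plan is to enlarge the probability space with an independent auxiliary i.i.d. sequence and then define $Y_n$ by a ``coupling'' that follows $X_n$ as long as the ranking equals $r$ and uses the auxiliary sequence otherwise. Concretely, I would pass to a product extension of $(\Omega,\mathcal{F},\mathbb{P})$ carrying an i.i.d.\ sequence $V_{n_0+1},V_{n_0+2},\ldots$ of $\mu^r$-distributed random vectors, jointly independent of $\mathcal{F}_\infty$. Define the enlarged filtration by $\mathcal{G}_n=\mathcal{F}_n$ for $n\leq n_0$ and $\mathcal{G}_n=\mathcal{F}_n\vee\sigma(V_{n_0+1},\ldots,V_n)$ for $n>n_0$, so that $\mathcal{G}_n\supset\mathcal{F}_n$ as required.

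Set $Y_n=X_n$ for $n\leq n_0$ and, recursively, for $n\geq n_0$ put
\begin{equation}
\Delta Y_{n+1}=\mathbf{1}_{E_n}\,\Delta X_{n+1}+\mathbf{1}_{E_n^c}\,V_{n+1},\qquad E_n=\{Y_n=X_n\}\cap\{rk(X_n)=r\}.
\end{equation}
By induction $Y_n\in\mathcal{G}_n$, hence $E_n\in\mathcal{G}_n$, so the construction is well-defined. Property (i) holds by definition. For property (iii) I would argue by induction on $n$: on $\bigcap_{k=n_0}^{n-1}\{rk(X_k)=r\}$, the equality $Y_{n_0}=X_{n_0}$ together with $Y_k=X_k$ and $rk(X_k)=r$ implies $E_k$ holds, forcing $\Delta Y_{k+1}=\Delta X_{k+1}$ and hence $Y_{k+1}=X_{k+1}$. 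The analogous induction using $\{rk(Y_k)=r\}$ works because, once we know $Y_k=X_k$, the two ranking events coincide at step $k$.

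The main point is verifying (ii), i.e.\ that $\Delta Y_{n+1}$ is independent of $\mathcal{G}_n$ with law $\mu^r$. For any Borel $B\subset\R^d$,
\begin{equation}
\Pr{\Delta Y_{n+1}\in B\midvert\mathcal{G}_n}=\mathbf{1}_{E_n}\Pr{\Delta X_{n+1}\in B\midvert\mathcal{G}_n}+\mathbf{1}_{E_n^c}\Pr{V_{n+1}\in B\midvert\mathcal{G}_n}.
\end{equation}
Since $V_{n+1}$ is independent of $\mathcal{G}_n$, the second conditional probability is $\mu^r(B)$. For the first, because $V_{n_0+1},\ldots,V_n$ are independent of $\mathcal{F}_\infty$, conditioning on $\mathcal{G}_n$ reduces to conditioning on $\mathcal{F}_n$, and by \cref{eqCondInd1} this equals $\mu^{rk(X_n)}(B)$; on $E_n$ this coincides with $\mu^r(B)$. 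Thus $\Pr{\Delta Y_{n+1}\in B\midvert\mathcal{G}_n}=\mu^r(B)$ almost surely, which simultaneously gives that $\Delta Y_{n+1}$ has distribution $\mu^r$ and is independent of $\mathcal{G}_n$. Since each $\Delta Y_k$ ($n_0<k\leq n$) is $\mathcal{G}_n$-measurable, iterating yields that $\{\Delta Y_n\}_{n\geq n_0+1}$ is i.i.d.\ with distribution $\mu^r$.

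The only delicate step is the conditional probability computation above; the subtlety is recognising that even though $E_n$ depends on both $X$ and the auxiliary $V$'s, the two random inputs $\Delta X_{n+1}$ and $V_{n+1}$ are each independent of the ``other side'' of $\mathcal{G}_n$ in the appropriate sense, which is exactly what allows us to reduce to \cref{eqCondInd1}. Once this observation is in hand, the rest of the proof is a straightforward induction.
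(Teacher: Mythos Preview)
Your proposal is correct and follows essentially the same approach as the paper. The paper phrases the identical coupling via the stopping time $\tau=\min\{n\geq n_0: rk(X_n)\neq r\}$ and writes $Y_n=X_{\tau\wedge n}+\sum_{m=\tau+1}^{n}U_m$ in closed form (so that $\Delta Y_{n+1}=\mathbf{1}_{\tau>n}\,\Delta X_{n+1}+\mathbf{1}_{\tau\leq n}\,U_{n+1}$), while you give the equivalent recursive formulation through the events $E_n$; the verification of (i)--(iii) is then carried out in the same way.
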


A process $Y_n$ that satisfies the above properties (for some filtration ${\cal G}_n$) will be said to \textit{$(r,n_0)$-mimic $X_n$}.
\begin{proof}
Let $\{U_n\}_{n\in\N}$ be a sequence of i.i.d random vectors in $\R ^d$ with distribution $\mu ^r$, independent of ${\cal F}_{\infty}$, and let ${\cal G}_n=\sigma (U_1,\ldots ,U_n,{\cal F}_n)$ and $\tau =\min \{ n\geq n_0:rk(X_n)\neq r\} $. Define
\begin{equation}
\label{eqRmirror}
    Y_n=X_{\tau \wedge n}+\sumdu{m=\tau +1}{n}U_m.
\end{equation}
with the convention that the sum is $0$ if $\tau+1>n$. Property (i) follows from the fact that $\tau \geq n_0$. For property (ii), note that since $\{\tau \leq n\}$ is ${\cal F}_n$-measurable, we get $Y_n\in {\cal G}_n$. Moreover,
\begin{equation}
    \Delta Y_{n+1}=\mathbf{1}_{\tau >n}\cdot \Delta X_{n+1}+\mathbf{1}_{\tau \leq n}\cdot U_{n+1},
\end{equation}
In particular, for any $n\geq n_0$ and any $S\in{\cal B}(\R^d)$, on $\{\tau >n\}$ we have
\begin{equation}
    \Pr[\nu]{\Delta Y_{n+1}\in S\midvert {\cal G}_n}=\Pr[\nu]{\Delta X_{n+1}\in S\midvert {\cal G}_n}=\mu ^{rk(X_n)}(S)=\mu ^r(S)\ \ a.s.,
\end{equation}
where the second equality follows from \cref{eqCondInd1}. Also, on $\{\tau \leq n\}$ we have
\begin{equation}
    \Pr[\nu]{\Delta Y_{n+1}\in S\midvert {\cal G}_n}=\Pr[\nu]{U_{n+1}\in S\midvert {\cal G}_n}=\Pr[\nu]{U_{n+1}\in S}=\mu ^r(S)\ \ a.s.
\end{equation}
Combining the last two equations we get
\begin{equation}
    \Pr[\nu]{\Delta Y_{n+1}\in S\midvert {\cal G}_n}=\mu ^r(S)\ \ a.s.,\ \ n\geq n_0,\ S\in{\cal B}(\R^d).
\end{equation}
Therefore, the sequence $\{\Delta Y_{n+1}\}_{n\geq n_0}$ is i.i.d. and, for each $n\geq n_0$, $\Delta Y_{n+1}$ has distribution $\mu ^r$ and is independent of ${\cal G}_n$, which completes the proof of (ii).

For property (iii), let $m\leq n$ and note that on the set $\{Y_m\neq X_m\}$ we have $\tau <m<\infty$ and by definition $\tau\geq n_0$, $Y_{\tau}=X_{\tau}$, and $rk(Y_{\tau })=rk(X_{\tau })\neq r$. Therefore, the intersection of $\{Y_m\neq X_m\}$ with both $\capdu{k=n_0}{n-1}\{rk(X_k)=r\}$ and $\capdu{k=n_0}{n-1}\{rk(Y_k)=r\}$ is empty a.s.
\end{proof}

\begin{proof}[Proof of necessity for \cref{theoremTerminalRankings}]
Let $r$ be a terminal ranking. Then, there exists some initial distribution $\nu$ and some $n_0\in \N$ such that $\Prd{\nu}{A}>0$, where
\begin{equation}
    A=\capdu{n=n_0}{\infty }\{rk(X_n)=r\}.
\end{equation}
Let $Y_n$ $(r,n_0)$-mimic $X_n$. By \cref{lemmaMirroring}iii we have
\begin{equation}
    \capdu{n=n_0}{\infty }\{rk(Y_n)=r\}=\capdu{n=n_0}{\infty }\{rk(X_n)=r\}=A.
\end{equation}
Fix some $i,j\in [d]$ and note that the sequence $d_n=Y^i_n-Y^j_n$, $n\geq n_0$, performs a random walk, starting at $d_{n_0}=Y^i_{n_0}-Y^j_{n_0}=X^i_{n_0}-X^j_{n_0}$, and with the step $\Delta d_{n+1}=d_{n+1}-d_n$ having the same distribution as $Z^r_i-Z^r_j$ (see \cref{eqZ}). In particular, for any $n\geq n_0$,
\begin{equation}
\label{dummyEq56}
    \Pr[\nu]{\Delta d_{n+1}\neq 0}=\Pr{Z^r_i\neq Z^r_j}=\mu^r(x_i\neq x_j).
\end{equation} 
and
\begin{equation}
\label{dummyEq58}
    \E_{\nu}\left[\Delta d_{n+1}\right]=\E\left[Z^r_i-Z^r_j\right]=q^r_{i}-q^r_{j}.
\end{equation}

If $\mu^r(x_i\neq x_j)\neq 0$, then the random walk is non-trivial, and in particular $\capdu{n=n_0}{\infty }\{Y^i_n=Y^j_n\}=\capdu{n=n_0}{\infty }\{d_n=0\}$ has probability $0$. If $r(i)=r(j)$, this means that $\capdu{n=n_0}{\infty }\{rk(Y_n)=r\}$ has probability $0$, contradicting the fact that $\Prd{\nu}{A}>0$. We conclude that if $r(i)=r(j)$, then $\mu^r(x_i\neq x_j)=0$.

For the second assertion, assume that in addition to $\mu^r(x_i\neq x_j)\neq 0$, we also have $q^r_{i}\leq q^r_j$. This means that either $d_n\to -\infty $ or the random walk is recurrent. In either case, $\Prd{\nu}{\capdu{n=n_0}{\infty }\{Y^i_n>Y^j_n\}}=\Prd{\nu}{\capdu{n=n_0}{\infty }\{d_n>0\}}=0$. Therefore, if $r(i)<r(j)$, then $\Prd{\nu}{\capdu{n=n_0}{\infty }\{rk(Y_n)=r\}}=0$, again contradicting the fact that $\Prd{\nu}{A}>0$. We conclude that if $r(i)<r(j)$, then either $\mu^r(x_i\neq x_j)=0$ or $q^r_i>q^r_j$.
\end{proof}

For the sufficiency part of \cref{theoremTerminalRankings}, we are going to prove the following more general result.

\begin{lemma}[Terminal rankings sufficient condition]
\label{theoremLimitRankingsSufficient}
Let $r\in {\cal R}$ and define $A=\{(i,j)\in {\cal R}\times {\cal R}:r(i)<r(j)\}$ and $A'=\{(i,j)\in {\cal R}\times {\cal R}:r(i)=r(j)\}$. Assume that for any $(i,j)\in A$, either $q^r_i>q^r_j$ or $\mu^r(x_i\neq x_j)=0$, and that for any $(i,j)\in A'$, $\mu^r(x_i\neq x_j)=0$. Then, there exists some $M>0$, such that for any initial distribution $\nu$ and any $n_0\in \N $ that satisfy
\begin{equation}
\label{conditionLargeDifferences}
     \Prd{\nu }{\capdu{(i,j)\in A}{} \left\{ X^{i}_{n_0}>X^{j}_{n_0}+M\right\},\capdu{(i,j)\in A'}{}\left\{ X^{i}_{n_0}=X^{j}_{n_0}\right\}}>0,
\end{equation}
we have
\begin{equation}
    \Prd{\nu }{\capdu{n=n_0}{\infty }\left\{ rk(X_n)=r\right\}}>0.
\end{equation}
\end{lemma}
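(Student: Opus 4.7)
The plan is to apply the mimicking construction of \cref{lemmaMirroring} in order to replace $X_n$ by a process $Y_n$ whose increments after time $n_0$ are i.i.d.\ with law $\mu^r$ and jointly independent of ${\cal F}_{n_0}$, and then to control each pairwise difference by the Strong Law of Large Numbers. Concretely, let $Y_n$ $(r,n_0)$-mimic $X_n$, with filtration ${\cal G}_n\supset {\cal F}_n$. By \cref{lemmaMirroring}(ii) the vectors $\Delta Y_{n_0+1},\Delta Y_{n_0+2},\ldots$ are i.i.d.\ with law $\mu^r$ and independent of ${\cal G}_{n_0}$, and by (iii) on the event $\bigcap_{n\geq n_0}\{rk(Y_n)=r\}$ one has $Y_n=X_n$ for every $n$, and in particular $rk(X_n)=r$ for all $n\geq n_0$. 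It therefore suffices to produce some $M>0$, depending only on $\mu^r$, for which $\Prd{\nu}{\bigcap_{n\geq n_0}\{rk(Y_n)=r\}}>0$ whenever \cref{conditionLargeDifferences} holds.

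For each ordered pair $(i,j)$, the process $d^{ij}_n:=Y^i_n-Y^j_n$, $n\geq n_0$, is a random walk whose step has the law of $Z^r_i-Z^r_j$. If $\mu^r(x_i\neq x_j)=0$, then this step is a.s.\ zero, so $d^{ij}_n=d^{ij}_{n_0}$ a.s.\ for every $n\geq n_0$. If instead $(i,j)\in A$ and $q^r_i>q^r_j$, the centred walk $W^{ij}_n:=d^{ij}_{n_0+n}-d^{ij}_{n_0}$ has positive drift $q^r_i-q^r_j$ and finite variance, so by the SLLN $W^{ij}_n\to\infty$ a.s., and in particular $W^{ij}_*:=-\inf_{n\geq 0}W^{ij}_n$ is a.s.\ finite. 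Taking the maximum over the finitely many such pairs, $W_*:=\max\{W^{ij}_*:(i,j)\in A,\,q^r_i>q^r_j\}$ is also a.s.\ finite, so there exists $M>0$ with $\Pr{W_*<M}>0$. Since the joint law of $\{W^{ij}_n\}$ depends only on $\mu^r$, so does this choice of $M$.

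With such $M$ fixed, let $B$ denote the event inside \cref{conditionLargeDifferences}, which lies in ${\cal F}_{n_0}\subset{\cal G}_{n_0}$, and let $C=\{W_*<M\}$, which lies in $\sigma(\Delta Y_{n_0+k}:k\geq 1)$ and is therefore independent of ${\cal G}_{n_0}$. Almost surely on $B\cap C$: for $(i,j)\in A'$, $Y^i_n-Y^j_n=X^i_{n_0}-X^j_{n_0}=0$ for every $n\geq n_0$; for $(i,j)\in A$ with $\mu^r(x_i\neq x_j)=0$, $Y^i_n-Y^j_n=X^i_{n_0}-X^j_{n_0}>M>0$; and for $(i,j)\in A$ with $q^r_i>q^r_j$, $Y^i_n-Y^j_n=(X^i_{n_0}-X^j_{n_0})+W^{ij}_{n-n_0}\geq M-W^{ij}_*>0$. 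Hence $rk(Y_n)=r$ for every $n\geq n_0$ on (a full-measure subset of) $B\cap C$. By independence, $\Prd{\nu}{B\cap C}=\Prd{\nu}{B}\cdot\Pr{W_*<M}>0$, which combined with the first paragraph closes the argument.

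I expect the only delicate point to be ensuring that the constant $M$ can be chosen uniformly in $\nu$ and $n_0$; this is precisely what the mimicking construction buys us, since it packages all the post-$n_0$ behaviour into increments whose joint law is governed solely by $\mu^r$, and therefore by the finitely many a.s.-finite random variables $W^{ij}_*$.
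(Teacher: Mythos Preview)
Your proof is correct and follows essentially the same approach as the paper's: use the $(r,n_0)$-mimicking construction to reduce to i.i.d.\ increments, invoke the SLLN to show each pairwise difference walk with positive drift has an a.s.\ finite infimum, choose $M$ accordingly, and conclude by independence of the post-$n_0$ increments from ${\cal G}_{n_0}$. The only cosmetic differences are that the paper introduces an auxiliary i.i.d.\ family $\{U^i_n\}$ up front to fix $M$ before invoking the mimicking construction, and uses a union bound (probability $<1/d^2$ per pair) rather than your direct choice via $\Pr{W_*<M}>0$; both routes yield the same conclusion.
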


\begin{proof}
Consider the collection of random variables $\{U^i_n\}_{n\in\N}^{i\in [d]}$, independent of ${\cal F}_{\infty}$, such that for each $i$, $\{U^i_n\}_n$ are i.i.d. with distribution same as $Z^r_i$. For any pair $(i,j)\in A$, $U^i_n-U^j_n$ is either identically zero (if $\mu^r(x_i\neq x_j)=0$) or it has positive mean and finite variance (if $q^r_i-q^r_j>0$). In the latter case, by the Strong Law of Large Numbers, $\sumdu{m=1}{n}(U^i_m-U^j_m)\to \infty $ a.s. as $n\to \infty $. Therefore, in both cases, $\sumdu{m=1}{n}(U^i_m-U^j_m)$ is bounded below a.s. Hence, there exists some $M>0$, such that for any pair $(i,j)\in A$,
\begin{equation}
\label{eqDummy27}
    \Pr{\underset{n\in\N}{\min }\, \sumdu{m=1}{n}(U^i_m-U^j_m)\leq -M}<\frac{1}{d^2}.
\end{equation}
Now let the initial distribution $\nu$ and $n_0\in\N$ satisfy \cref{conditionLargeDifferences} for the value of $M$ specified in \cref{eqDummy27}, i.e. $\Prd{\nu }{D} >0$, where 
\begin{equation}
\begin{aligned}
\label{dummyEq29}
    D & =\capdu{(i,j)\in A}{} \left\{ X^{i}_{n_0}>X^{j}_{n_0}+M\right\}\cap \capdu{(i,j)\in A'}{} \left\{ X^{i}_{n_0}=X^{j}_{n_0}\right\}
\end{aligned}
\end{equation}
We want to show that $\Prd{\nu}{\capdu{n=n_0}{\infty }rk(X_n)=r}>0$. Let $Y_n$ be a process that $(r,n_0)$-mimics $X_n$ (see \cref{lemmaMirroring}) and note that \cref{dummyEq29} implies
\begin{equation}
\begin{aligned}
    D=\capdu{(i,j)\in A}{} \left\{ Y^{i}_{n_0}>Y^{j}_{n_0}+M\right\}\cap \capdu{(i,j)\in A'}{} \left\{ Y^{i}_{n_0}=Y^{j}_{n_0}\right\}.
\end{aligned}
\end{equation}
For any $(i,j)\in A'$, $n\geq n_0$, we have $\Prd{\nu}{\Delta Y^i_{n+1}\neq \Delta Y^j_{n+1}}=\mu^r(x_i\neq x_j)=0$ by \cref{lemmaMirroring}ii and by assumption, hence on the set $D$ we have
\begin{equation}
\label{eqDummy41}
    Y^i_n=Y^j_n,\text{ for all }(i,j)\in A',n\geq n_0.
\end{equation}
We further define
\begin{equation}
\begin{aligned}
    B_{(i,j)} & =\capdu{n\geq n_0}{}\left\{ (Y^i_{n}-Y^j_{n})-(Y^i_{n_0}-Y^j_{n_0})>-M\right\},\ \  (i,j)\in A,\\
    B & =\capdu{(i,j)\in A}{}\,B_{(i,j)}.
\end{aligned}
\end{equation}
Note that on the set $D\cap B_{(i,j)}$ we have $Y^i_n>Y^j_n$ for all $n\geq n_0$ and any $(i,j)\in A$. Combining this with \cref{eqDummy41}, we get that on the set $D\cap B$, it holds that $rk(Y_n)=r$ for all $n\geq n_0$, which implies $rk(X_n)=r$ for all $n\geq n_0$ (\cref{lemmaMirroring}iii). It is therefore enough to show that $\Prd{\nu}{D\cap B}>0$.

Note that for each $(i,j)\in A$, $\{ (Y^i_{n+n_0}-Y^j_{n+n_0})-(Y^i_{n_0}-Y^j_{n_0})\} _{n\in\N}$ has the same distribution as $\left\{\sumdu{m=1}{n}(U^i_m-U^j_m)\right\}_{n\in\N}$, therefore \cref{eqDummy27} implies that $\Prd{\nu }{B_{(i,j)}}>1-1\slash d^2$, and since $card(A)<d^2$, we get $\Prd{\nu }{B}>0$. By assumption we also have $\Prd{\nu}{D}>0$. Finally observe that by \cref{lemmaMirroring}ii, $D\in {\cal G}_{n_0}$ and $B\independent {\cal G}_{n_0}$, hence $\Prd{\nu}{D\cap B}=\Prd{\nu}{D}\cdot \Prd{\nu}{B}>0$, which completes the proof.
\end{proof}

\begin{proof}[Proof of sufficiency for \cref{theoremTerminalRankings}]
By assumption $r$ satisfies the conditions of \cref{theoremLimitRankingsSufficient}. Let $M$ be as in that lemma and define the initial distribution $\nu$ as follows: $X^i_0=(d-r(i))\cdot (M+1)$ a.s. Then, $r(i)=r(j)$ implies $X^i_0=X^j_0$ a.s., while $r(i)<r(j)$ implies $X^i_0>X^j_0+M$ a.s. That is, $\nu$ satisfies \cref{conditionLargeDifferences} with $n_0=0$, hence $\Prd{\nu}{\capdu{n=0}{\infty}\{ rk(X_n)=r\}}>0$, in particular $r$ is terminal.

\end{proof}

\subsection{Limit theorems for $X_n$}
\label{sectionLimitTheorems}

In this section we will prove the following theorem about the long term behavior of $X_n$.

\begin{proposition}[Strong Law of Large Numbers and Central Limit Theorem]
\label{marketShareTheorem}
For any $r\in {\cal R}$,
\begin{equation}
\label{eqSLLN}
    \underset{n\to \infty }{\lim }\frac{X_n}{n}=q^{r}\ \text{ a.s. on the set }\left\{ \underset{n\to \infty }{\lim} rk(X_n)=r\right\}.
\end{equation}
Furthermore, for any initial distribution $\nu$, if $\Prd{\nu}{\underset{k\to\infty }{\lim}rk(X_k)=r}>0$, then for each $i\in [d]$, $r\in {\cal R}$, and $x\in\R$,
\begin{equation}
\label{eqCLT}
    \limn \Prd{\nu}{\frac{X^i_n-n\cdot q^r_i}{\sqrt{n}\cdot \sigma ^r_i}\leq x\midvert \underset{k\to \infty }{\lim} rk(X_k)=r}=\Phi(x),
\end{equation}
where $\Phi$ denotes the cumulative distribution function of a standard normal distribution.
\end{proposition}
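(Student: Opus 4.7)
The plan is to exploit the mimicking construction of \cref{lemmaMirroring} so as to reduce, on the event $\{rk(X_n)\to r\}$, the dynamics of $X_n$ to that of an i.i.d.\ sum. For each $n_0\in\N$ let $Y_n$ be a process that $(r,n_0)$-mimics $X_n$, and set $A_{n_0}=\capdu{k=n_0}{\infty}\{rk(X_k)=r\}$ and $A'_{n_0}=\capdu{k=n_0}{\infty}\{rk(Y_k)=r\}$. Property (iii) of \cref{lemmaMirroring}, applied with the roles of $X$ and $Y$ in both directions, shows that $A_{n_0}=A'_{n_0}$ a.s., and on this common event $Y_k=X_k$ for every $k$; moreover $A_{n_0}\uparrow A:=\{\limn rk(X_n)=r\}$, and by property (ii) the sequence $\{\Delta Y_m\}_{m>n_0}$ is i.i.d.\ with distribution $\mu^r$ and independent of ${\cal G}_{n_0}$.

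The SLLN part is then immediate: Kolmogorov's SLLN applied to the i.i.d.\ sequence $\{\Delta Y^i_m\}_{m>n_0}$ yields $Y^i_n\slash n\to q^r_i$ a.s., and since $X^i_n=Y^i_n$ on $A_{n_0}$ and $A=\cupdu{n_0\in\N}{}A_{n_0}$, we obtain $X^i_n\slash n\to q^r_i$ a.s.\ on $A$, proving \eqref{eqSLLN}.

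For the CLT, set $W_n=(X^i_n-nq^r_i)\slash(\sqrt{n}\sigma^r_i)$ and define $W^Y_n$ analogously; the two coincide on $A_{n_0}$. Given $\delta>0$, pick $n_0^*$ with $\Prd{\nu}{A\setminus A_{n_0^*}}<\delta$, so that the task reduces to proving $\Prd{\nu}{W^Y_n\leq x,\, A_{n_0^*}}\to \Phi(x)\,\Prd{\nu}{A_{n_0^*}}$. Since $A_{n_0^*}=A'_{n_0^*}$ depends on the entire tail of the $Y$-increments, approximate it from above by the ${\cal G}_T$-measurable events $C_T=\capdu{k=n_0^*}{T}\{rk(Y_k)=r\}$, which decrease to $A'_{n_0^*}$. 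For fixed $T>n_0^*$, decompose
\begin{equation*}
    W^Y_n=V_{n,T}+R_{n,T},\qquad R_{n,T}=\frac{1}{\sqrt{n}\,\sigma^r_i}\sum_{m=T+1}^n(\Delta Y^i_m-q^r_i),
\end{equation*}
with $V_{n,T}=(Y^i_T-T q^r_i)\slash(\sqrt{n}\sigma^r_i)$ being ${\cal G}_T$-measurable. By property (ii) of \cref{lemmaMirroring}, $R_{n,T}$ is independent of ${\cal G}_T$; the classical CLT together with $\sqrt{(n-T)\slash n}\to 1$ gives $R_{n,T}\toD N\sim{\cal N}(0,1)$; and $V_{n,T}\to 0$ a.s.\ as $n\to\infty$. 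A standard Slutsky-type argument then yields $\Prd{\nu}{W^Y_n\leq x,\, C_T}\to \Phi(x)\,\Prd{\nu}{C_T}$. Sending $T\to\infty$ replaces $C_T$ by $A'_{n_0^*}$ with error $\Prd{\nu}{C_T\setminus A'_{n_0^*}}\to 0$, and finally sending $\delta\to 0$ delivers the desired convergence.

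The main obstacle is that $A$ is not determined by finitely many increments, so the i.i.d.\ structure of the $\Delta Y_m$'s cannot be combined with the conditioning event $A$ in a direct way. The finite-horizon approximation $C_T\searrow A'_{n_0^*}$ is exactly what restores the independence between the conditioning event and the tail sum on which the classical CLT operates; everything else is a sandwich argument using the continuity of $\Phi$.
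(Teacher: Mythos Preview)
Your proof is correct and follows essentially the same strategy as the paper: reduce via the $(r,n_0)$-mimicking process to an i.i.d.\ sum, get the SLLN directly, and for the CLT approximate the tail conditioning event by finite-horizon events $C_T$ on which independence from the remaining increments is available, then interchange limits. The only differences are cosmetic: you make the Slutsky decomposition $W^Y_n=V_{n,T}+R_{n,T}$ explicit, whereas the paper simply asserts that the CLT for $Y^i_n$ persists under conditioning on any ${\cal G}_m$-measurable event, and you handle the two limit interchanges by hand with uniform-in-$n$ error bounds, while the paper packages them into \cref{lemmaConditioningContinuity,lemmaDoubleConvergence}.
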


For the proof we are going to need a couple of lemmas whose proofs are given in the Appendix.

\begin{lemma}
\label{lemmaConditioningContinuity}
Let $A_n$, $n\in\N$, and $A$ be measurable sets in a probability space, each with positive probability, and suppose that $A_n\to A$ a.s. (i.e. $\Pr{(A_n\backslash A)\cup (A\backslash A_n)}\to 0$). Then, $\Pr{S\midvert A_n}\to \Pr{S\midvert A}$ uniformly in $S\in {\cal F}$.
\end{lemma}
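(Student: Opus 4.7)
The plan is to bound $\sup_{S\in\mathcal{F}}|\mathbb{P}(S\mid A_n)-\mathbb{P}(S\mid A)|$ by a quantity depending only on $\delta_n := \mathbb{P}((A_n\setminus A)\cup(A\setminus A_n))$, which tends to $0$ by hypothesis. The whole argument is elementary arithmetic with the triangle inequality; the point is that the bounds obtained are free of $S$, so uniformity in $S$ will come automatically. There is no substantive obstacle, and the only subtlety worth flagging is ensuring that every intermediate estimate is itself uniform in $S$.

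The two uniform-in-$S$ estimates I would rely on are
\begin{equation}
|\mathbb{P}(A_n)-\mathbb{P}(A)|\le \delta_n \quad\text{and}\quad |\mathbb{P}(S\cap A_n)-\mathbb{P}(S\cap A)|\le \delta_n.
\end{equation}
The first is immediate from the definition of $\delta_n$; the second follows from the set-theoretic inclusion $(S\cap A_n)\triangle (S\cap A)\subset A_n\triangle A$, valid for every $S$. The first estimate in particular gives $\mathbb{P}(A_n)\to\mathbb{P}(A)>0$, so for all sufficiently large $n$ we have $\mathbb{P}(A_n)\ge \mathbb{P}(A)/2$; I would restrict to such $n$, for which the conditional probabilities are well-defined and the denominators in the next step are bounded below uniformly.

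For such $n$ I would put the difference over a common denominator,
\begin{equation}
\mathbb{P}(S\mid A_n)-\mathbb{P}(S\mid A)=\frac{\mathbb{P}(S\cap A_n)\,\mathbb{P}(A)-\mathbb{P}(S\cap A)\,\mathbb{P}(A_n)}{\mathbb{P}(A_n)\,\mathbb{P}(A)},
\end{equation}
then split the numerator by adding and subtracting $\mathbb{P}(S\cap A_n)\,\mathbb{P}(A_n)$ and apply the triangle inequality together with the two displayed estimates and the trivial bound $\mathbb{P}(S\cap A_n)\le \mathbb{P}(A_n)$. This collapses to an inequality of the form $|\mathbb{P}(S\mid A_n)-\mathbb{P}(S\mid A)|\le 2\delta_n/\mathbb{P}(A)$, whose right-hand side is independent of $S$ and tends to $0$, giving the desired uniform convergence.
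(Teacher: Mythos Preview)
Your proof is correct and follows essentially the same approach as the paper: both arguments bound the difference of conditional probabilities by elementary triangle-inequality manipulations yielding an $S$-independent upper bound that tends to zero. The algebraic splittings differ only cosmetically (the paper writes $\mathbb{P}(S\cap A)=\mathbb{P}(S\cap A_n)+\mathbb{P}(S\cap A\setminus A_n)-\mathbb{P}(S\cap A_n\setminus A)$ before separating terms, whereas you put everything over a common denominator and add/subtract a cross term), and your explicit bound $2\delta_n/\mathbb{P}(A)$ is in fact slightly cleaner; note also that your restriction to large $n$ is unnecessary since $\mathbb{P}(A_n)>0$ is assumed throughout and cancels from your final estimate.
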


\begin{lemma}
\label{lemmaDoubleConvergence}
Let $a_{m,n}\in\R$, $m,n\in\N$, and suppose that $\underset{m\to\infty}{\lim }a_{m,n}=a_n\in \R$ uniformly in $n$, and $\limn a_{m,n}=a\in\R$ for all $m\in\N$. Then, $\limn a_n=a$.
\end{lemma}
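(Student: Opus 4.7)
The plan is to give a direct $\epsilon$--$N$ argument of Moore--Osgood type, combining the uniform convergence in the first hypothesis with the pointwise convergence in the second. The key idea is that uniform convergence in $n$ lets us replace $a_n$ by $a_{m,n}$ for a \emph{single} fixed $m$, with an error bound that is independent of $n$; we can then use ordinary convergence $a_{m,n}\to a$ for that one $m$ to finish.

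Concretely, given $\epsilon>0$, I would first use the uniform convergence $a_{m,n}\to a_n$ to choose $m_0\in\N$ so large that $|a_{m_0,n}-a_n|<\epsilon/2$ holds simultaneously for every $n\in\N$. With this $m_0$ now fixed, the second hypothesis $\limn a_{m_0,n}=a$ supplies some $N\in\N$ such that $|a_{m_0,n}-a|<\epsilon/2$ for all $n\geq N$. The triangle inequality then yields
\begin{equation}
    |a_n-a|\leq |a_n-a_{m_0,n}|+|a_{m_0,n}-a|<\frac{\epsilon}{2}+\frac{\epsilon}{2}=\epsilon,\qquad n\geq N,
\end{equation}
which is precisely $\limn a_n=a$.

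There is no real obstacle here; the only thing to be careful about is the order of quantifiers. One must select $m_0$ \emph{first} using uniformity (so that the bound $|a_{m_0,n}-a_n|<\epsilon/2$ is valid for all $n$, in particular for all $n\geq N$), and only afterwards choose $N$ depending on $m_0$. Swapping this order would require uniform control of $a_{m,n}\to a$ in $m$, which is not assumed. Two to three lines of display math should suffice for the full write-up.
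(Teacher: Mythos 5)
Your argument is correct and is essentially identical to the paper's proof: both fix $m_0$ first via the uniform convergence, then choose the threshold in $n$ for that single $m_0$, and conclude by the triangle inequality (the paper uses $\epsilon$ and $2\epsilon$ where you use $\epsilon/2$). Nothing further is needed.
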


\begin{proof}[Proof of \cref{marketShareTheorem}]
Since by definition $\{rk(X_k)\to\infty \}=\cupdu{n_0\in\N}{}\,\capdu{k=n_0}{}\{rk(X_k)=r\}$, it is enough to show that 
\begin{equation}
\label{eqSLLNdummy}
    \underset{n\to \infty }{\lim }\frac{X_n}{n}=q^{r}\ \text{ a.s. on the set }{\capdu{k=n_0}{}\{rk(X_k)=r\}}
\end{equation}
and
\begin{equation}
\label{eqCLTdummy}
    \limn \Prd{\nu}{\frac{X^i_n-n\cdot q^r_i}{\sqrt{n}\cdot \sigma ^r_i}\leq x\midvert \capdu{k=n_0}{\infty }\{rk(X_k)=r\}}=\Phi(x),\ \ i\in[d],
\end{equation}
for any $n_0\in\N$ that satisfies $\Prd{\nu}{\capdu{k=n_0}{\infty }\{rk(X_k)=r\}}>0$. 

Fix such an $n_0$ and let $Y_n$ be a process that $(r,n_0)$-mimics $X_n$ (see \cref{lemmaMirroring}). Since $\{\Delta Y_n\}_{n\geq n_0+1}$ is an i.i.d. sequence whose $i$-th component has mean $q^r_i$ and standard deviation $\sigma ^r_i$, we have by the Strong Law of Large Numbers,
\begin{equation}
\label{dummyEq20}
    \frac{Y_n}{n}\to q^r\text{ a.s.}
\end{equation}
and by the Central Limit Theorem,
\begin{equation}
\label{dummyEq47}
    \frac{Y^i_n-n\cdot q^r_i}{\sqrt{n}\cdot \sigma ^r_i}\overset{d}{\to }{\cal N}(0,1).
\end{equation}
\Cref{eqSLLNdummy} follows from \cref{dummyEq20} and \cref{lemmaMirroring}iii. To show \cref{eqCLTdummy}, first note that \cref{dummyEq47} can be strengthened: since $\Delta Y_{n+1}\independent {\cal F}_n$ for each $n\geq n_0$, we have that for any $m\geq n_0$ and any $x\in\R$,
\begin{equation}
\label{dummyEq23}
    \Prd{\nu}{\frac{Y^i_n-n\cdot q^r_i}{\sqrt{n}\cdot \sigma ^r_i}\leq x\midvert \capdu{k=n_0}{m}\{rk(X_k)=r\}}\overset{n\to\infty}{\to} \Phi (x).
\end{equation}
Furthermore, since $\capdu{k=n_0}{m}\{rk(X_k)=r\}\overset{m\to\infty }{\longrightarrow}\capdu{k=n_0}{\infty }\{rk(X_k)=r\}$, \cref{lemmaConditioningContinuity} implies that
\begin{equation}
\begin{aligned}
    \Prd{\nu}{\frac{Y^i_n-n\cdot q^r_i}{\sqrt n\cdot \sigma ^r_i}\leq x\midvert \capdu{k=n_0}{m}\{rk(X_k)=r\}}\overset{m\to\infty}{\to} \\
    \Prd{\nu}{\frac{Y^i_n-n\cdot q^r_i}{\sqrt n\cdot \sigma ^r_i}\leq x\midvert \capdu{k=n_0}{\infty }\{rk(X_k)=r\} }
\end{aligned}
\end{equation}
uniformly in $n$. Combining this with \cref{dummyEq23} and \cref{lemmaDoubleConvergence} we get
\begin{equation}
    \Prd{\nu}{\frac{Y^i_n-n\cdot q^r_i}{\sqrt n\cdot \sigma ^r_i}\leq x\midvert \capdu{k=n_0}{\infty }\{rk(X_k)=r\}}\to \Phi(x) ,
\end{equation}
as $n\to \infty$. By \cref{lemmaMirroring}iii, this is equivalent to \cref{eqCLTdummy}.

\end{proof}

We also have the following partial converse of \cref{marketShareTheorem}.

\begin{proposition}
\label{marketShareConverse}
If $X_n\slash n\to x\in \R ^d$ and the components of $x$ are all distinct, then $rk(X_n)\to rk(x)$ and $x=q^{rk(x)}$.
\end{proposition}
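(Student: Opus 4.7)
The plan is to split the proposition into two conclusions and prove them in sequence, the first by a purely deterministic argument on sample paths, the second by invoking the strong law of large numbers part of Proposition~\ref{marketShareTheorem}.

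First I would handle $rk(X_n) \to rk(x)$. Fix any sample path on which $X_n/n \to x$ with all components of $x$ distinct. For every pair $i \neq j$, one of $x_i > x_j$ or $x_i < x_j$ holds; say $x_i > x_j$. Since $X^i_n/n \to x_i$ and $X^j_n/n \to x_j$, there is some $N_{ij}$ beyond which $X^i_n/n > X^j_n/n$, equivalently $X^i_n > X^j_n$. Taking the maximum $N = \max_{i \neq j} N_{ij}$ (a finite set), for all $n \geq N$ the pairwise comparisons among the components of $X_n$ agree with those among the components of $x$, and hence $rk(X_n) = rk(x)$. This is a purely pathwise conclusion; no probability is involved.

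For the second conclusion, I would apply \eqref{eqSLLN} of Proposition~\ref{marketShareTheorem} with $r = rk(x)$. That equation gives, almost surely on the event $\{\lim_n rk(X_n) = rk(x)\}$, the convergence $X_n/n \to q^{rk(x)}$. By the first step, the event on which $X_n/n$ converges to $x$ with $x$ having distinct components is contained in $\{\lim_n rk(X_n) = rk(x)\}$, so almost surely on this event we have simultaneously $X_n/n \to x$ (by assumption) and $X_n/n \to q^{rk(x)}$ (by the SLLN). Uniqueness of limits in $\mathbb{R}^d$ then forces $x = q^{rk(x)}$.

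I do not anticipate a genuine obstacle: the first part is an elementary continuity argument on the coordinate functions, and the second is a one-line application of the previously proved SLLN combined with uniqueness of limits. The only subtlety worth flagging is that the identification $x = q^{rk(x)}$ is an almost-sure statement about the limit $x$ (the set of sample paths violating it has probability zero), which is the natural reading given that Proposition~\ref{marketShareTheorem} is itself stated almost surely.
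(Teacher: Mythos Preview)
Your proposal is correct and matches the paper's own proof essentially line for line: the paper also argues that $x_i>x_j$ forces $X^i_n>X^j_n$ for large $n$, concludes $rk(X_n)\to rk(x)$, and then invokes \cref{marketShareTheorem} to identify $x=q^{rk(x)}$. You are simply more explicit about the pathwise nature of the first step and the uniqueness-of-limits justification in the second.
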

\begin{proof}
Let $i,j\in [d]$ and assume without loss of generality that $x_i>x_j$. Then, for large enough $n$, $X^i_n>X^j_n$, so $rk(X_n)$ ranks $i$ higher than $j$. Since this is true for all pairs $i,j$, we get that for large enough $n$, $rk(X_n)=rk(x)$, hence $rk(X_n)\to rk(x)$. By \cref{marketShareTheorem}, $X_n\slash n\to q^{rk(x)}$.
\end{proof}

\subsection{Terminal rankings and initial distributions}
\label{sectionInitialDistributions}

Although \cref{theoremTerminalRankings} gives the possible limits of the ranking for a ranking-based process in principle, it doesn't say for which pairs of initial distributions $\nu$ and terminal rankings $r$ we have $\Prd{\nu}{rk(X_n)\to r}>0$. To see that for the same terminal ranking $r$ it is possible to have $\Prd{\nu}{rk(X_n)\to r}>0$ for some initial distributions $\nu$ and not for others, consider a deterministic system with $d=2$ and such that
\begin{equation}
\begin{aligned}
    \Pr{\Delta X_{n+1}=(1,0)\midvert rk(X_n)=\text{id}_2} & =1 \ \ \text{ and }\\
    \Pr{\Delta X_{n+1} =(0,1)\midvert rk(X_n)\neq \text{id}_2} & =1,
\end{aligned}
\end{equation}
where $\text{id}_2$ is the identity function on the set $\{1,2\}$.

In words, if $X^1_n>X^2_n$, then $X^1_n$ increases by $1$ and $X^2_n$ remains constant. If $X^1_n\leq X^2_n$, then $X^2_n$ increases by $1$ and $X^1_n$ remains constant. Clearly, if we start at $X_0=(0,0)$, $rk(X_n)\to r$ a.s., where $r(1)=2$, $r(2)=1$, while if we start at $X_0=(1,0)$, $rk(X_n)\to \text{id}_2$ a.s.

From the above example it might seem that the only reason that a strict ranking $r$ satisfying $q^{r}_{r^{-1}(1)}>q^{r}_{r^{-1}(2)}>\ldots >q^{r}_{r^{-1}(d)}$ might fail to satisfy $\Prd{\nu}{rk(X_n)\to r}>0$ is that it is not reachable from the given initial distribution, in the sense that $\Pr[\nu]{\bigcup_{n=1}^{\infty }\{rk(X_n)=r\}}=0$. However, this is not the only case. For example, let $d=3$, and suppose that
\begin{equation}
\begin{aligned}
    \Pr{\Delta X_{n+1}=(5,-2,0)\midvert rk(X_n)=\text{id}_3} & =1\slash 2,\\
    \Pr{\Delta X_{n+1} =(-3,3,0)\midvert rk(X_n)=\text{id}_3} & =1\slash 2,\ \ \text{ and}\\
    \Pr{\Delta X_{n+1}=(0,0,1)\midvert rk(X_n)\neq \text{id}_3} & =1,
\end{aligned}
\end{equation}
In words, whenever $X^1_n>X^2_n>X^3_n$, with probability $1\slash 2$ the first component will increase by $5$ and the second will decrease by $2$, and also with probability $1\slash 2$ the first component will decrease by $3$ and the second will increase by $3$, while the last component remains constant a.s. For any other ranking, the third component increases by $1$ and the rest remain constant a.s.

Now suppose we begin at $X_0=(2,1,0)$ a.s., so that $rk(X_0)=\text{id}_3$ a.s. Clearly, after the first step the ranking will necessarily change and after that $\Delta X_{n}=(0,0,1)$ deterministically, so that for large $n$ we will have either $X^3_n>X^1_n>X^2_n$ or $X^3_n>X^2_n>X^1_n$. We see that despite the fact that $rk(X_0)=\text{id}_3$ and $q^{\text{id}_3}_{1}>q^{\text{id}_3}_2>q^{\text{id}_3}_3$, for the specific initial distribution $\nu$ we get $\Prd{\nu }{rk(X_n)\to {\text{id}_3}}=0$.

The above examples might seem discouraging. We have the following positive result, which states that such situations do not arise if a certain condition is satisfied. The condition roughly says that, no matter the ranking, there is some positive probability for any component to increase faster than the rest, and the increments of the rest to follow any given non-strict order.

\begin{proposition}
\label{propositionLimitRankingsSufficient}
Suppose that for any permutation $\sigma $ of $[d]$ and any $r'\in R$,
\begin{equation}
\label{dummyEq21}
    \mu^{r'}(x_{\sigma _1}>x_{\sigma _2}\geq x_{\sigma _3}\geq\ldots\geq x_{\sigma _d})>0.
\end{equation}
Then, for any initial distribution $\nu $ and any terminal ranking $r$,
\begin{equation}
    \Prd{\nu}{rk(X_n)\to r}>0.
\end{equation}
\end{proposition}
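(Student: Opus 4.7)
The plan is to reduce to \cref{theoremLimitRankingsSufficient} by showing that from any initial distribution $\nu$ the process reaches the well-separated region $S_r^M := \{x\in\R^d : x_{r^{-1}(k)} > x_{r^{-1}(k+1)} + M \text{ for all } k=1,\ldots,d-1\}$ with positive probability in finitely many steps, where $M$ is the constant furnished by that lemma. A preliminary observation is that every terminal $r$ must be strict under our hypothesis: if $r(i)=r(j)$ for some $i\neq j$, then \cref{theoremTerminalRankings} forces $\mu^r(x_i\neq x_j)=0$, yet the hypothesis applied to any $\sigma$ with $\sigma_1=i,\sigma_2=j$ yields $\mu^r(x_i>x_j)>0$, a contradiction. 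Writing $a_k=r^{-1}(k)$, the terminal characterization also gives $q^r_{a_1}>\cdots>q^r_{a_d}$, and membership in $S_r^M$ is exactly the hypothesis \cref{conditionLargeDifferences} of \cref{theoremLimitRankingsSufficient} in this strict setting.

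The central step is a combinatorial $(d-1)$-phase steering strategy. By inner regularity applied to each positive-measure event supplied by the hypothesis, and then taking the minimum over the finite collection of pairs $(\sigma,r')$, there exist $\delta,L,p>0$ such that for every ranking $r'$ and every permutation $\sigma$ of $[d]$,
\begin{equation*}
    \mu^{r'}\!\left(z_{\sigma_1}-z_{\sigma_2}\geq\delta,\ z_{\sigma_m}\geq z_{\sigma_{m+1}} \text{ for } 2\leq m\leq d-1,\ \|z\|_\infty\leq L\right)\geq p.
\end{equation*}
For each $k=1,\ldots,d-1$, define $\sigma^k=(a_k,a_{k+1},a_1,a_2,\ldots,a_{k-1},a_{k+2},\ldots,a_d)$. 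In phase $k$, performed in order $k=1,2,\ldots,d-1$, we take $T_k$ consecutive steps, conditioning each increment to fall in the bounded event above for the pair $(\sigma^k,rk(X_n))$. A per-position inspection of $D_j:=X^{a_j}-X^{a_{j+1}}$ under phase $k$'s event shows: $D_k$ strictly grows by at least $\delta$ per step; if $k\geq 2$ then $D_{k-1}$ shrinks by at most $2L$ per step (its change equals $Z_{a_{k-1}}-Z_{a_k}\leq Z_{a_{k+1}}-Z_{a_k}\leq -\delta$ and is $\geq -2L$); and for every $j\neq k-1,k$, the index $a_j$ precedes $a_{j+1}$ in $\sigma^k$, so the cascade gives $Z_{a_j}\geq Z_{a_{j+1}}$ and $D_j$ is non-decreasing.

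On $\{\|X_0\|_\infty\leq K\}$ this yields $D_j\geq -2K+T_j\delta-2L T_{j+1}$ at time $T:=T_1+\cdots+T_{d-1}$ (with $T_d:=0$). Choosing $T_{d-1}>(M+2K)/\delta$ and then $T_j>(M+2K+2L T_{j+1})/\delta$ inductively downward guarantees $D_j>M$ for every $j$. By iterated conditioning on $\{{\cal F}_n\}$ using \cref{eqCondInd1}, the probability that all $T$ prescribed events occur is at least $p^T$, and since $\nu$ has finite second moments we may choose $K$ so that $\Prd{\nu}{\|X_0\|_\infty\leq K}>0$; combining these yields $\Prd{\nu}{X_T\in S_r^M}>0$. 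Finally, by the Markov property at time $T$, conditional on ${\cal F}_T$ the future process has the law of a ranking-based process started at $X_T$; on $\{X_T\in S_r^M\}$, \cref{theoremLimitRankingsSufficient} applies with Dirac initial distribution $\delta_{X_T}$ and $n_0=0$, and inspecting its proof the resulting probability is at least a uniform constant $\alpha\geq 1-1/d>0$ (the event $B$ there concerns a mirrored i.i.d.\ walk whose law does not depend on the starting point in $S_r^M$). Hence $\Prd{\nu}{rk(X_n)\to r}\geq\alpha\cdot\Prd{\nu}{X_T\in S_r^M}>0$.

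The main obstacle is the combinatorial design of the $(d-1)$ phases: the hypothesis provides only one strict inequality per step, so we cannot separate all $d-1$ consecutive gaps simultaneously. The resolution is the rotated permutation $\sigma^k$, whose non-strict cascade keeps every gap $D_j$ with $j\neq k-1$ non-decreasing while only $D_{k-1}$ suffers controlled linear damage; this damage is repaid by running the earlier phase $k-1$ with $T_{k-1}$ exponentially larger than $T_k$, which is exactly what the backward inductive choice of the $T_j$ achieves.
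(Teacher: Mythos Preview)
Your argument is correct, but the steering strategy is genuinely different from the paper's. The paper also reduces to \cref{theoremLimitRankingsSufficient}, but its induction is \emph{backward on the set of gaps being tracked}: at stage $i$ (running from $i=d$ down to $i=1$) it only needs the gaps $D_i,\ldots,D_{d-1}$ to exceed $M$, and it uses the event $A_i=\{x_i-\epsilon>x_{i+1}\geq\cdots\geq x_d\}$, which places no constraint whatsoever on coordinates $1,\ldots,i-1$. Consequently the paper never has to worry about damaging an already-established earlier gap; in stage $i$ the gaps $D_1,\ldots,D_{i-1}$ are simply irrelevant and may be destroyed. The number of steps $K$ at stage $i$ is chosen adaptively from the state at the start of that stage. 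Your approach instead runs phases forward $k=1,\ldots,d-1$ with the rotated permutations $\sigma^k$, accepts bounded damage to the immediately preceding gap $D_{k-1}$, and compensates by choosing phase durations $T_j$ backward so that each $T_j$ is large enough to absorb the later hit from phase $j+1$. Both designs are valid; the paper's is a bit leaner because it avoids the bookkeeping of simultaneous control over all $d-1$ gaps, while yours uses the full permutation hypothesis more directly and fixes all phase lengths in advance from a single bound on $\|X_0\|_\infty$.

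Two minor remarks on your write-up. First, the final step can be shortened: once you have $\Prd{\nu}{X_T\in S_r^M}>0$, this is exactly condition \eqref{conditionLargeDifferences} with $n_0=T$ (and $A'=\emptyset$ since $r$ is strict), so \cref{theoremLimitRankingsSufficient} applies directly to the original $\nu$ without invoking the Markov property or a uniform lower bound $\alpha$. Second, your stated value $\alpha\geq 1-1/d$ is not quite what the proof of \cref{theoremLimitRankingsSufficient} gives (a union bound over $|A|=\binom{d}{2}$ pairs with individual failure probability $<1/d^2$ yields $\alpha>1-\binom{d}{2}/d^2$); this is immaterial for correctness since any uniform positive bound suffices, but you may as well sidestep the issue via the direct application above.
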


\begin{remark}
\label{remarkOnlyStrictRankings}
The condition of \cref{propositionLimitRankingsSufficient} implies that $\mu^r(x_i\neq x_j)>0$ for all $i,j\in [d]$ and $r\in {\cal R}$, which in particular implies the condition of \cref{theoremTerminalRankingsSpecial}. Consequently, under the condition of \cref{propositionLimitRankingsSufficient}, only strict rankings may be terminal.
\end{remark}

\begin{example}
In a ranking-based P\'olya urn, with probability one, exactly one of the components of $\Delta X_{n+1}$ is $1$ and the rest are $0$ (see also \cref{sectionPolyaUrn}). Therefore, \cref{dummyEq21} is satisfied if and only if for any ranking there is positive probability of adding a ball of any given color. In \cref{example:additivePolyaUrn}, this is equivalent to either $\lambda _d>0$ or $a_i>0$ for all $i\in [d]$.

More generally, for processes that change one component at a time, \cref{dummyEq21} is satisfied if and only if, for any ranking, every component has non-zero probability of increasing.
\end{example}

\begin{proof}[Proof of \cref{propositionLimitRankingsSufficient}]
By \cref{remarkOnlyStrictRankings} we may assume that $r$ is a strict ranking. Also, by renaming the indices, we may assume that $r$ is the identity map on $[d]$, i.e. $r(i)=i$ for all $i\in [d]$. Let $M>0$ be as in \cref{theoremLimitRankingsSufficient} and define
\begin{equation}
\begin{aligned}
    C^j_{n} & =\left\{X^j_{n}>X^{j+1}_{n}+M\right\},\ \ j\in[d-1],\ \  n\in\N,\\
    B^i_n & =\capdu{j=i}{d-1}C^j_{n},\ \ i\in[d-1],\ \  n\in\N,
\end{aligned}
\end{equation}
and $B^d_n=\Omega $, $n\in\N$. By \cref{theoremLimitRankingsSufficient}, it is enough to show that $\Prd{\nu}{\cupdu{n\in\N}{}B^1_n}>0$. We will use (backwards) induction on $i$ to show that $\Prd{\nu}{\cupdu{n\in\N}{}B^i_n}>0$ for all $i\leq d$, with the base case $i=d$ being trivially true. Suppose then that $\Prd{\nu}{\cupdu{n\in\N}{}B^{i+1}_n}>0$ or, equivalently, that there exists some $n\in\N$ such that $\Prd{\nu}{B^{i+1}_n}>0$. Fix such an $n$. From \cref{dummyEq21} and continuity, there exists some $\epsilon>0$ such that $\mu^{r'}(A_i)>0$ for all $r'\in{\cal R}$, where
\begin{equation}
    A_i=\{x\in\R^{d}:x_i-\epsilon >x_{i+1}\geq x_{i+2}\geq \ldots \geq x_{d}\}.
\end{equation}
For any $j\in [d-1]$ and $k\in\N$, define 
\begin{equation}
D^j_{m,k}=\left\{X^j_{m+k}-X^j_{m}\geq X^{j+1}_{m+k}-X^{j+1}_{m}\right\}.
\end{equation}
and 
\begin{equation}
D^j_{m,k}(\epsilon)=\left\{X^j_{m+k}-X^j_{m}-\epsilon \geq X^{j+1}_{m+k}-X^{j+1}_{m}\right\}.
\end{equation}
In particular, $D^j_{m,1}=\left\{\Delta X^j_{m+1}\geq \Delta X^{j+1}_{m+1}\right\}$, and similarly for $D^j_{m,1}(\epsilon)$. Therefore, from \cref{eqCondInd1} we get that for any $m\in\N$,
\begin{equation}
\label{dummyEq81}
\begin{aligned}
    \Prd{\nu}{D^i_{m,1}(\epsilon),\capdu{j=i+1}{d-1}D^j_{m,1}\midvert {\cal F}_m} & =\mu^{rK(X_m)}(A_i)\\
    & \geq \underset{r'\in {\cal R}}{\min }\, \mu^{r'}(A_i)>0\text{ a.s.}
\end{aligned}
\end{equation}

Let $K\in\N$ be such that
\begin{equation}
\label{dummyEq45}
    \Pr[\nu]{F_{K}\cap B^{i+1}_n}>0,
\end{equation}
where
\begin{equation}
    F_{K}=\{X^i_n-X^{i+1}_n>M-K\epsilon \}.
\end{equation}
This is always possible, since $\cupdu{K\in\N}{}F_{K}=\Omega $ and $\Pr[\nu]{B^{i+1}_n}>0$ by assumption. Applying \cref{dummyEq81} for $m=n,n+1,\ldots ,n+(K-1)$ and using $D^j_{m,1}\in{\cal F}_{m+1}$, it easily follows that
\begin{equation}
\label{dummyEq79}
\begin{aligned}
    \Prd{\nu}{D^i_{n,K}(K\epsilon),\capdu{j=i+1}{d-1}D^j_{n,K}\midvert {\cal F}_n}>0\ \ a.s.
\end{aligned}
\end{equation}

Observe that
\begin{equation}
\begin{aligned}
    C^j_n\cap D^j_{n,K}\subset C^j_{n+K}, & \ \ j=i+1,\ldots ,d-1\\
    F_{K}\cap D^i_{n,K}(K\epsilon)\subset C^i_{n+K}. &
\end{aligned}
\end{equation}
Combining these two relations and the definition of $B^i_n$ we get
\begin{equation}
\begin{aligned}
    \Pr[\nu]{B^i_{n+K}} & =\Pr[\nu]{\capdu{j=i}{d-1}C^j_{n+K}}\\
    & \geq \Pr[\nu]{\capdu{j=i+1}{d-1}\left(C^j_n\cap D^j_{n,K}\right), F_{K},D^i_{n,K}(K\epsilon)}\\
    & =\Pr[\nu]{F_{K},B^{i+1}_n,D^i_{n,K}(K\epsilon),\capdu{j=i+1}{d-1}D^j_{n,K}}\\
    & =\Pr[\nu]{F_{K},B^{i+1}_n}\cdot \Pr[\nu]{D^i_{n,K}(K\epsilon),\capdu{j=i+1}{d-1}D^j_{n,K}\midvert F_{K},B^{i+1}_n}\\
    & >0,
\end{aligned}
\end{equation}
with the last line following from \cref{dummyEq45,dummyEq79}. This concludes the inductive proof.
\end{proof}

\section{Applications}
\label{sectionApplications}

\subsection{Ranking-based P\'olya urns and urn functions}
\label{sectionPolyaUrn}

In this section we look at how our results apply to the case of ranking-based P\'olya urns in terms of urn functions. We call \textit{ranking-based P\'olya urn} a ranking-based process $X_n\in\R^d$ where $\Delta X_n\in \{ 0,1\}^d$ and $\sumdu{i=1}{d}\Delta X^i_n=1$ a.s. Note that in this case
\begin{equation}
    q^{rk(X_n)}_i=\E \left[\Delta X^i_{n+1}\midvert rk(X_n)\right]=\Pr{\Delta X^i_{n+1}=1\midvert rk(X_n)},
\end{equation}
that is, $q^r_i$ is the probability of adding a ball of color $i$, when the ranking is $r$.

We want to compare our results to \citep{arthur1986strong,hill1980strong}, where the results are stated in terms of the fixed points of the urn function. The urn function $f:\Delta ^{d-1}\to \Delta ^{d-1}$, where
\begin{equation}
    \Delta ^{d-1}:= \left\{ x\in [0,1]^d,\ \sumdu{i}{}\,  x_i=1\right\}
\end{equation}
is the standard $(d-1)$-dimensional simplex, takes as argument the vector of proportions of balls of each color, and its $i$-th component $f_i$ gives the probability of the next ball being of color $i$. For a ranking-based urn, $f(x)$ must be constant in regions of constant ranking, that is, its value may only depend on $rk(x)$. With our notation we have
\begin{equation}
\label{eqPolyaProba}
    f_i(x)=\Pr{\Delta X^i_n=1\midvert rk(X_n)=rk(x)}=q^{rk(x)}_i.
\end{equation}
The next proposition uses our results from \cref{sectionResultsGeneral} to relate the fixed points of $f$ with the limiting behavior of $X_n\slash n$.

\begin{proposition}
\label{propositionPolya}
Consider a ranking-based P\'olya urn with urn function $f$ and let $A$ be the set of fixed points of $f$ whose coordinates are all distinct, i.e.
\begin{equation}
    A=\{ x\in \Delta ^{d-1}: f(x)=x,\ x_i\neq x_j\ \text{ for all } i\neq j\}.
\end{equation}
Then:
\begin{enumerate}
    \item For any $x\in A$, there is some $\nu$ such that $\Prd{\nu}{X_n\slash n\to x}>0$. If $q^r_i>0$ for all $i\in [d],r\in{\cal R}$, then $\Prd{\nu}{X_n\slash n\to x}>0$ holds for all $\nu$.
    
    \item If $q^r_i>0$ for all $i\in [d],r\in{\cal R}$ and furthermore \cref{qualityOrderingAssumption} is satisfied, then for any initial distribution $\nu$, $\limn X_n\slash n\in A$ a.s. (in particular $X_n\slash n$ converges a.s.).
    
    \item Conditioned on $\limn X_n\slash n=x\in A$, $\frac{X^i_n-nx_i}{\sqrt{nx_i(1-x_i)}}$ converges to a standard normal distribution. More precisely, for any initial distribution $\nu$, $i\in[d]$, $x\in A$, and $y\in\R$,
    \begin{equation}
        \Prd{\nu}{\frac{X^i_n-n\cdot x_i}{\sqrt{nx_i(1-x_i)}}\leq y\midvert \underset{k\to\infty }{\lim }\frac{X_k}{k}=x}\to \Phi(y),
    \end{equation}
    whenever $\Pr[\nu]{\underset{k\to\infty }{\lim }\frac{X_k}{k}=x}>0$.
\end{enumerate}

\end{proposition}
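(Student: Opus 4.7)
The plan is to reduce each of the three parts to results already established in \cref{sectionResultsGeneral}, combined with the single extra structural identity specific to P\'olya urns, namely $f_i(x)=q^{rk(x)}_i$ from \cref{eqPolyaProba}. The key first observation is that $x\in A$ forces $rk(x)$ to be a strict ranking (distinct coordinates) and, via the fixed-point condition, gives $q^{rk(x)}_i=x_i$ for every $i$.

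For (a), given $x\in A$ set $r:=rk(x)$. The two bullets of \cref{theoremTerminalRankings} are then satisfied trivially: the tie case is vacuous, and for $r(i)<r(j)$ one has $q^r_i=x_i>x_j=q^r_j$. Hence $r$ is terminal, and \cref{marketShareTheorem} supplies an initial distribution $\nu$ with $\Prd{\nu}{X_n/n\to q^r}=\Prd{\nu}{X_n/n\to x}>0$. For the ``any $\nu$'' strengthening I check the hypothesis \cref{dummyEq21} of \cref{propositionLimitRankingsSufficient}: in a P\'olya urn the event $\{x_{\sigma_1}>x_{\sigma_2}\geq\dots\geq x_{\sigma_d}\}$ coincides with the event that the single incremented coordinate is $\sigma_1$, which under $\mu^{r'}$ has probability $q^{r'}_{\sigma_1}>0$. \cref{propositionLimitRankingsSufficient} then gives $\Prd{\nu}{rk(X_n)\to r}>0$ for every $\nu$, and \cref{marketShareTheorem} finishes the job.

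For (b), \cref{qualityOrderingAssumption} and \cref{settlingTheorem} make $rk(X_n)$ converge a.s.\ to a random terminal ranking. Any such ranking $r$ must be strict, because in a P\'olya urn $\mu^r(x_i\neq x_j)=q^r_i+q^r_j$, which is positive under the hypothesis $q^r_i>0$, so the first bullet of \cref{theoremTerminalRankings} forbids ties. By \cref{theoremTerminalRankingsSpecial} the coordinates of $q^r$ are strictly ordered; thus $rk(q^r)=r$ and the fixed-point identity $f(q^r)=q^r$ is immediate from \cref{eqPolyaProba}, placing $q^r$ in $A$. Combining with the SLLN part of \cref{marketShareTheorem} gives $X_n/n\to q^r\in A$ a.s. For (c), the two conditioning events agree a.s.: the inclusion $\{X_n/n\to x\}\subset\{rk(X_n)\to rk(x)\}$ follows from \cref{marketShareConverse} (distinct coordinates of $x$), and the reverse inclusion from \cref{marketShareTheorem} together with $q^{rk(x)}=x$. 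Under $\mu^r$ with $r=rk(x)$ the component $Z^r_i$ is Bernoulli with parameter $x_i$, so $\sigma^r_i=\sqrt{x_i(1-x_i)}$, and the CLT half of \cref{marketShareTheorem} is precisely the stated convergence.

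The main obstacles are bookkeeping: identifying convergence events expressed in terms of $rk(X_n)$ with those expressed in terms of $X_n/n$, and verifying that the P\'olya-urn positivity hypotheses translate exactly into the abstract hypothesis \cref{dummyEq21} and into the tie-exclusion needed to invoke \cref{theoremTerminalRankingsSpecial} in (b). Once these translations are made, every step is a direct citation to the results of \cref{sectionResultsGeneral}.
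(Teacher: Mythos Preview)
Your proposal is correct and follows essentially the same route as the paper: identify $x\in A$ with the terminal ranking $r=rk(x)$ via $q^r=x$, then invoke \cref{theoremTerminalRankings}, \cref{propositionLimitRankingsSufficient}, \cref{settlingTheorem}, \cref{theoremTerminalRankingsSpecial}, \cref{marketShareTheorem}, and \cref{marketShareConverse} as appropriate. The only cosmetic difference is in part (b): you obtain $rk(q^r)=r$ (hence $f(q^r)=q^r$) directly from the strict ordering in \cref{theoremTerminalRankingsSpecial}, whereas the paper reaches $q^R=q^{rk(q^R)}$ by applying \cref{marketShareConverse} to the already-established limit $X_n/n\to q^R$; both are valid and equally short.
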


\begin{proof}
\begin{enumerate}
    \item Let $x=(x_1,\ldots ,x_d)\in A$ and denote $r=rk(x)$, so that $x_{r^{-1}(1)}>\ldots >x_{r^{-1}(d)}$. Then, $f(x)=q^r$ (\cref{eqPolyaProba}). Since $x$ is a fixed point of $f$, we get $q^{r}=x$, hence also $q^{r}_{r^{-1}(1)}>\ldots >q^{r}_{r^{-1}(d)}$. By \cref{theoremTerminalRankings} $r$ is terminal, so $\Prd{\nu}{rk(X_n)\to r}>0$ for some initial distribution $\nu$. By \cref{marketShareTheorem}, $\Prd{\nu}{X_n\slash n\to x}=\Prd{\nu}{X_n\slash n\to q^r}\geq \Prd{\nu}{rk(X_n)\to r}>0$. If $q^r_i>0$ for all $i\in [d],r\in{\cal R}$, then the condition of \cref{propositionLimitRankingsSufficient} is satisfied, therefore $r$ being terminal implies $\Prd{\nu}{rk(X_n)\to r}>0$ for any initial distribution $\nu$.
    
    \item By \cref{settlingTheorem} $rk(X_n)$ converges a.s. and by \cref{theoremTerminalRankingsSpecial} the limit $R$ has to be a strict ranking, in particular $q^{R}_i\neq q^{R}_j$ for all $i\neq j$ a.s. By \cref{marketShareTheorem} $X^i_n\slash n\to q^{R}$ and by \cref{marketShareConverse} $q^{R}=q^{rk(q^{R})}$, which is a fixed point of $f$ by \cref{eqPolyaProba}, thus $q^{R}\in A$.
    
    \item Denote $r=rk(x)$. By \cref{marketShareConverse,marketShareTheorem}, $x=q^r$ and 
    \begin{equation}
        \left\{\underset{k\to\infty }{\lim }X_k\slash k=x\right\}=\left\{\underset{k\to\infty }{\lim }rk(X_k)=r\right\}\ \  a.s.
    \end{equation}
    Hence, by the second part of \cref{marketShareTheorem},
     \begin{equation}
        \Prd{\nu}{\frac{X^i_n-n\cdot x_i}{\sqrt{n}\cdot \sigma ^r_i}\leq y\midvert \underset{k\to\infty }{\lim }\frac{X_k}{k}=x}\to \Phi(y).
    \end{equation}
    The result follows once we recall that $\sigma ^r_i=\sqrt{Var(Z^r_i)}$ and that $Z^r_i$ is a Bernoulli random variable with parameter $q^r_i=x_i$, hence $\sigma ^r_i=\sqrt{x_i(1-x_i)}$.
\end{enumerate}
\end{proof}

We now compare our results to the ones that appear in \citep{arthur1986strong,hill1980strong}. We are going to restrict ourselves to ranking-based P\'olya urns with the urn function being constant in $n\in\N$ (in \citep{arthur1986strong} the urn function is allowed to be a function of $n$).

Part 1 of \cref{propositionPolya}, in particular the case $q^r_i>0$ for all $i\in [d],r\in{\cal R}$, agrees with Theorem 5.1 in \citep{arthur1986strong}. In that theorem, the authors show that $X_n\slash n$ has positive probability of converging to any point $\theta \in \Delta ^{d-1}$ that is a stable fixed point of $f$, in the sense that $f(\theta )=\theta $ and there is a neighborhood $U$ of $\theta $ and a positive-definite matrix $C$ such that
\begin{equation}
\label{eqStableFixedPoint}
    \left\langle C(x-f(x)),x-\theta)\right\rangle >0,\ \ \text{ for all }\ \  x\in \Delta ^{d-1}\cap U,\ \  x\neq \theta .
\end{equation}
Note that in the ranking-based case, where $f$ is piecewise constant, any fixed point $\theta $ with all coordinates being distinct (i.e. $\theta \in A$) is always stable, since then $f(x)=\theta $ identically in a neighborhood of $\theta $, so the above condition is satisfied if we take $C$ to be the identity matrix. The result in \citep{arthur1986strong} is more general than part 1 of \cref{propositionPolya}, because it also applies to fixed points whose coordinates are not distinct. On the other hand, there are no analogues of parts 2 and 3 of our \cref{propositionPolya} in \citep{arthur1986strong} that apply to the ranking-based case (but Theorem 3.1 in that reference is an analogue of part 2 for continuous urn functions $f$).

As mentioned in the introduction, in \citep{hill1980strong} the case $d=2$ is studied and it is shown that $X_n\slash n$ converges a.s. Note that we have shown this only if $q^r_i>0$ for all $i\in [d],r\in{\cal R}$, and \cref{qualityOrderingAssumption} is satisfied. In \citep{hill1980strong} no such assumption is made. However, the proof there relies on properties of the real line (when $d=2$, the process is described by $X^1_n$ alone, because $X^2_n=n-X^1_n$), thus it is not obvious how to generalize to $d\in\N$.

Regarding the support of the limit, Theorem 4.1 in \citep{hill1980strong} is similar to part 2 of our \cref{propositionPolya}: assuming that $d=2$ and $q^r_i>0$ for all $i\in [d],r\in {\cal R}$, if $A$ contains a single point, then the two results coincide. Part 2 of \cref{propositionPolya} also applies when $A$ contains more than one (i.e. two) points, while Theorem 4.1 in \citep{hill1980strong} does not. On the other hand, if $A$ is empty, which (in the case $d=2$ with $q^r_i>0$ for all $i\in [d],r\in {\cal R}$) is equivalent to \cref{qualityOrderingAssumption} not being satisfied, part 2 of \cref{propositionPolya} does not apply, while Theorem 4.1 in \citep{hill1980strong} implies that $X_n\slash n\to 1\slash 2$.

We emphasize that the above is a comparison of results in the special case of ranking-based P\'olya urns (and in the case of \citep{hill1980strong}, when $d=2$). However, both our results and those in \citep{arthur1986strong,hill1980strong} apply to more general settings: our results apply to more general (ranking-based) processes than P\'olya urns, while those in \citep{arthur1986strong,hill1980strong} apply to non-ranking-based P\'olya urns.

\subsection{Ranking items in online interfaces}
\label{sectionPopularityRanking}

A crucial setting where ranking-based reinforcement is common is online rank-ordered interfaces such as search engines, online marketplaces, newspapers and discussion forums. In this section we describe an application of our results to such systems. The model we describe is based on assumptions about the ranking algorithms implemented and user behavior, so we begin with motivating our assumptions. At the end of the section we describe how these assumptions may be relaxed.

Online interfaces often facilitate access to information for their users by ranking their content  \citep{liu2009learning}. People, in return, pay more attention to and interact more with results that appear higher on ranked lists \citep{germano2019few,joachims2005accurately}. One of the most fundamental and commonly employed ranking algorithms places the options on the screen according to their popularity, that is the number of clicks, sales, citations or upvotes that different options have obtained so far. The rank-by-popularity algorithm is very simple to implement, and many popular websites have relied on it in the past or use some version of it at present.\footnote{For example, Reddit used to order comments by the number of upvotes, Google scholar used to order articles by the number of citations (and still offers that possibility when looking at a profile), Amazon offers the possibility to order options by the number of reviews, Goodreads orders user comments by the number of likes, etc.} A wide array of behavioral models  about how people choose among different items in an ordered list have been postulated over the past years in economics, management, marketing and computer science (for a review of models in computer science see \cite{chuklin2015click}). We will consider a staple computer science model for the probability of clicking on a link, called the position-based model \cite[p. 10]{chuklin2015click}. We note that although we will refer to clicks, the model can also be used to describe downloads and citations of papers, purchases of products, likes of comments etc.

In the position-based model, a link is first examined by the user and then clicked if its content is considered to be relevant. This can be stated as
\begin{equation}
\label{dummyEq64}
    C^i_n=E^i_n\cap D^i_n,
\end{equation}
where
\begin{equation}
\begin{aligned}
    E^i_n& =\{ n\text{-th user examines link }i\},\\
    D^i_n& =\{\text{link } i\text{ is relevant to the } n \text{-th user}\},\\
    C^i_n & =\{ n\text{-th user clicks on link }i\},
\end{aligned}
\end{equation}

We are interested in the vector $X_n=\left(X^1_n,\ldots ,X^d_n\right)$, where $X^i_n$ is the number of users that have clicked on link $i$, up to the $n$-th user. Clearly, we have $\Delta X^i_{n+1}=1$ if $C^i_{n+1}$ occurs, and $\Delta X^i_{n+1}=0$ otherwise. Note that $X_n$ is not a ranking-based P\'olya urn as defined in \cref{sectionPolyaUrn}, because more than one of its components may change simultaneously.

The probability that a link is examined depends only on the position that it appears in, and typically decreases for later positions. Assuming that results appear according to the rank-by-popularity algorithm, that is, by descending number of clicks so far (and randomly breaking ties), this factor depends only on (a) the current rank of result $i$ with respect to the number of clicks and (b) the number of links that are ranked equally with it. For our purposes, we may allow the probability that a link is examined to depend on the full ranking (i.e. how all of the links are ranked), so we will denote
\begin{equation}
\label{dummyEq52}
    a^r_i=\Pr{E^i_{n+1}\midvert rk(X_n)=r}.
\end{equation}
The expression on the right hand side makes sense whenever $\{rk(X_n)=r\}$ has positive probability. We will be making this assumption below whenever similar expressions appear, without further mention.

We also assume that links that appear higher are more likely to be examined, that is, if $r(i)<r(j)$, then $a^r_i>a^r_j$. Finally, we assume that $a^r_i>0$ for all $i\in [d],r\in{\cal R}$, so that there is always positive probability of clicking on any of the links.

The probability of link $i$ being relevant to the user depends only on the link itself, that is $D^i_{n+1}$ is independent of $\left\{E^j_{n+1}\right\}_{j\in[d]},\left\{D^j_{n+1}\right\}_{j\neq i}$ and $rk(X_n)$. We denote
\begin{equation}
\begin{aligned}
\label{dummyEq66}
    u_i=\Pr{D^i_{n+1}}
\end{aligned}
\end{equation}
and assume that $u_i\in (0,1)$. 

The number $u_i$ can be considered a measure of objective quality of the link (not necessarily known to the ranking algorithm). Combining \cref{dummyEq64,dummyEq52,dummyEq66} we get
\begin{equation}
\begin{aligned}
\label{dummyEq50}
    q^r_i & =\Pr{\Delta X^i_{n+1}=1\midvert rk(X_n)=r}\\
    &=\Pr{C^i_{n+1}\midvert rk(X_n)=r}\\
    &=\Pr{E^i_{n+1}\cap D^i_{n+1}\midvert rk(X_n)=r}\\
    &=\Pr {E^i_{n+1}\midvert rk(X_n)=r}\cdot \Pr{D^i_{n+1}\midvert E^i_{n+1},rk(X_n)=r}\\
    &=\Pr{E^i_{n+1}\midvert rk(X_n)=r}\cdot \Pr{D^i_{n+1}}\\
    &=a^r_i\cdot u_i.
\end{aligned}
\end{equation}

Since we are assuming that $a^r_i>0$ for all $i\in [d],r\in{\cal R}$, and $u_i\in (0,1)$ for all $i\in [d]$, we also have $q^r_i\in (0,1)$ for all $i\in [d],r\in{\cal R}$. Moreover, using the fact that the $D^j_{n+1}$'s are independent of everything else and $\Pr{D^j_{n+1}}<1$ for all $j\in [d]$, we get
\begin{equation}
\label{eqDummy101}
\begin{aligned}
    &\ \ \ \ \Pr{\Delta X^i_{n+1}=1,\Delta X^j_{n+1}=0\ \text{ for all }j\neq i\midvert rk(X_n)=r}\\
    & \geq q^r_i\cdot \proddu{j\neq i}{}\Pr{\left(D^j_{n+1}\right)^c}>0,
\end{aligned}
\end{equation}
for any $i\in d,r\in{\cal R}$.

Now let $i\neq j$ and suppose (without loss of generality) that $u_i\geq u_j$. Recall that, by assumption, for any ranking $r$ that ranks $i$ higher than $j$, we have $a^r_i>a^r_j$, hence \cref{dummyEq50} gives $q^r_i>q^r_j$. That is, $i$ quasi-dominates $j$. By \cref{eqDummy101}, $\mu^r(x_i\neq x_j)>0$ for all $r\in {\cal R}$, therefore $i$ actually dominates $j$, hence \cref{qualityOrderingAssumption} is satisfied. 

\Cref{settlingTheorem} now says that $rk(X_n)$ converges a.s. \Cref{eqDummy101} also means that the conditions of \cref{theoremTerminalRankingsSpecial,propositionLimitRankingsSufficient} are satisfied, therefore the possible limits for $rk(X_n)$ are those strict rankings $r$ for which $q^r_{r^{-1}(1)}>\ldots >q^r_{r^{-1}(d)}$. Note that in general there will be more than one rankings $r$ satisfying this condition, especially if the effect of the position is strong ($a^r_i$ decreases quickly with the position of $i$ in the ranking $r$). Thus, it is likely that links of smaller objective quality $u_i$ will end up being ranked higher in the long-term (thus getting more clicks) than links of higher quality. This is an important consequence, since it implies that in general people will be directed towards links that are less likely to be relevant to them, and it reveals an inherent drawback of algorithms that rank results by popularity. 

Our framework can be generalized to other models of user behaviour. For example, we could allow the probability $a^r_i$ of examining a link to depend on the ranking in an arbitrary way (subject to \cref{qualityOrderingAssumption} being satisfied). In particular the model applies to cases where the position of other links also affects the probability of examining a link at a certain position, such as in the cascade model in computer science \citep{craswell2008experimental} or satisficing models in economics \citep{caplin2011search}. More generally, the assumption that links are first examined and then independently judged to be relevant or not can be discarded altogether; it is enough to require that the links possess some objective quality $u_i$, and whenever link $i$ is ranked higher than $j$ and $u_i>u_j$, it is more likely for $i$ to be clicked (i.e. $q^r_i>q^r_j$). For example, the $q^r_i$'s can be described by a multi-attribute utility model \citep{keeney1993decisions}, where the link position is one of the attributes and $u_i$ is a summary of the rest of the attributes.

In a similar vein, we can relax assumptions related to the ranking algorithm. For instance, more sophisticated ranking algorithms may not rank the links based on their number of clicks only, but according to some calculated score that takes into account several other features \citep{page1999pagerank,liu2009learning}. The conceptual framework we developed in this section still applies, as long as the popularity is taken into account in calculating the score. Further, recent algorithmic approaches estimate the objective utility or relevance $u_i$ of different items by debiasing the number of clicks from attention imbalances \citep{joachims2017unbiased,agarwal2019general}. Even for these algorithms, however, ranking-based rich-get-richer dynamics can be at play, if a link's actual or perceived utility for the users depends on the object's popularity \citep{arthur1989competing,muchnik2013social}. For example, when ranking social networking applications, the rank may convey information about their utility, therefore some form of advantage may persist even when correcting for attention disparities.

\section{Discussion}
We have developed a mathematical framework for describing systems characterized by ranking-based rich-get-richer dynamics. Specifically, we defined a ranking-based process as a discrete-time Markov process in $\R^d$ whose increment distributions depend only on the current ranking of the components of the process. Under a ranking-based reinforcement assumption (\cref{qualityOrderingAssumption}), we showed that the ranking converges (\cref{settlingTheorem}) and proved a Strong Law of Large Numbers and a Central Limit Theorem-type result for the process itself (\cref{marketShareTheorem}). We also found conditions in terms of the Markov transition kernel to check whether a particular ranking is a possible limit ranking. In some cases we were able to characterize the support of the limit of the ranking independently of the initial distribution (\cref{propositionLimitRankingsSufficient}). We also translated our results in terms of urn functions for the special case of ranking-based P\'olya urns, in order to compare them with previous results with which they partially overlap (\cref{sectionPolyaUrn}). Finally, we described an application to rank-ordered web interfaces (\cref{sectionPopularityRanking}).

Models of systems with rich-get-richer dynamics have been commonplace in the social, behavioral and computer sciences, and they have been used to describe the observed dynamics in a wide variety of settings. So far, there have been two main families of such models. The first family goes back to Gibrat's law \cite{gibrat1931inegalits}, which states that firms grow proportionally to their current size, and independently of the performance of their competitors. Variations of the notion of proportional growth have been applied across disciplines, for example to model citation growth \citep{allison1982cumulative} and city growth \citep{gabaix1999zipf,gabaix2008power}. Models based on Gibrat's law are inherently unsuitable for capturing ranking-based dynamics, because of their assumption that growth is independent of any competitors.

The second family of rich-get-richer models builds on the notion of preferential attachment \citep{simon1955class}, which assumes that entities grow when new units ``attach'' to them, but these new units are more likely to attach to entities that are already larger. Such models are usually described mathematically as P\'olya urns or one of their many generalizations \citep{mahmoud2008polya,pemantle2007survey}. What is common in almost all of these generalized P\'olya urns, and relevant to us, is the fact that the number of balls of a given color added is chosen from a finite set, with probabilities that are each a \textit{continuous} function of the proportion of balls of a \textit{single} color, except that they are normalized to sum to one. Although this allows for some form of competition among colors, it precludes direct comparison of the proportions of balls of different colors, so it does not allow modeling systems where the growth rates depend on the differences between the sizes of different entities, let alone their ranking. Two exceptions are the works of Arthur et al. \cite{arthur1986strong} and of Hill et al. \cite{hill1980strong}, which allow arbitrary comparisons of proportions of balls of different colors, but they only treat the simplest type of P\'olya urn processes. These works do not specifically focus on ranking-based competition, but they partially cover them as extreme cases, with a subset of their results applying to them. See the Introduction and \cref{sectionPolyaUrn} for details.

Compared to these existing approaches, our work differs in two main ways. First, our approach is at a more abstract level; the literature related to preferential attachment and Gibrat's law usually starts with a specific model, with the goal of reproducing some empirically observed phenomena, such as outcome unpredictability and skewed popularity distributions. Our approach in contrast is model-independent; we have identified conditions that are sufficient to lead to certain rich-get-richer phenomena, i.e. conditions that when satisfied by \textit{any} model, regardless of the exact assumptions made, they lead to the stated results. This is illustrated in \cref{sectionPopularityRanking}, where we point out that ranking-based rich-get-richer dynamics could be set in motion under a wide array of behavioral or algorithmic assumptions, as long as \cref{qualityOrderingAssumption} is satisfied. In this respect, our work is similar in spirit to the work of Arthur et al. \citep{arthur1986strong}.

The second and perhaps more distinctive difference of our work, is the fact that it covers an opposite end of the spectrum of rich-get-richer dynamics. The distributions of the increments of the various components, instead of depending (continuously) on the current level of each of the components separately, they are piecewise constant with respect to the current levels, with discontinuities occurring when the ranking of the components changes. In other words, we focus explicitly on the role of ranking-based competition. However, our framework does not consider other types of competition, nor does it allow for any explicit dependence of the increments on the current level of the process, other than through the ranking.

The above delineates a promising future research direction: one could envisage a general mathematical theory of Markov rich-get-richer processes that encompasses all of the above cases, by allowing for an arbitrary dependence of the increments' distribution on the current level of the whole vector of the process, subject to the minimal conditions for rich-get-richer dynamics. The work of Arthur et al. \cite{arthur1986strong} is in this direction for the case of simple P\'olya urn processes, but no such framework currently exists for more general processes.

\begin{appendix}

\section{Rankings are equivalent to weak orderings}
\label{appendixWeakOrderings}

The following proposition says that rankings are equivalent to weak orderings. A weak ordering on a set is like a total ordering, except that it allows for ``ties''. More precisely, a weak ordering ``$\succeq $'' on $S$ is a binary relation that is transitive and strongly complete, i.e. that for any two elements $a,b\in S$, at   least one of the relations $a\succeq b$ or $b\succeq a$ holds \cite{roberts1985measurement}. Recall that we would get a \textit{total} order, if we further required that $a\succeq b$ and $b\succeq a$ implies $a=b$.

\begin{proposition}
\label{propositionWeakOrderings}
    There is a bijection between rankings of a finite set $S$ and weak orderings on $S$, given by $r\mapsto \succeq _r$, where
    \begin{equation}
    \label{dummyEq61}
        a\succeq _r b \text{ whenever } r(a)\leq r(b).
    \end{equation}
    The above map satisfies
    \begin{equation}
    \label{dummyEq63}
        r(a)=card\{b\in S:a\nsucceq _r b\}+1.
    \end{equation}
    The ranking $r$ is strict if and only if $\succeq _r$ is a total order on $S$.
\end{proposition}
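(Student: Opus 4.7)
The plan is to verify directly that the map $r\mapsto \succeq_r$ of \eqref{dummyEq61} is well-defined, injective and surjective onto the set of weak orderings on $S$, and then to check the strictness equivalence. First I would show that $\succeq_r$ really is a weak ordering: transitivity of $\succeq_r$ is inherited from transitivity of $\leq$ on $\mathbb{N}$, and strong completeness holds because $r(a),r(b)\in\mathbb{N}$ are comparable. Next I would derive \eqref{dummyEq63} as an easy consequence of \eqref{dummyEq67}: by the definition of $\succeq_r$, the set $\{b:a\nsucceq_r b\}$ equals $\{b:r(b)<r(a)\}$, and \eqref{dummyEq67} says that its cardinality is $r(a)-1$. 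Because \eqref{dummyEq63} recovers $r$ from $\succeq_r$ alone, the map is automatically injective.

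The main work is surjectivity. Given a weak ordering $\succeq$ on $S$, I would define $r(a):=\mathrm{card}\{b\in S: a\nsucceq b\}+1$ and verify three things: (i) $r$ takes values in $[S]$, since $a\succeq a$ (a consequence of strong completeness) guarantees $a\notin\{b:a\nsucceq b\}$; (ii) $r$ is a ranking; (iii) $\succeq_r=\succeq$. For (ii) and (iii) the crux is the biconditional
\begin{equation}
    r(b)<r(a)\ \Longleftrightarrow\ a\nsucceq b.
\end{equation}
To prove the forward direction via its contrapositive, I would assume $a\succeq b$ and show that the strict upper set $D(x):=\{c:c\succ x\}=\{c:x\nsucceq c\}$ satisfies $D(a)\subseteq D(b)$: if $c\succ a$ and $a\succeq b$, then $c\succeq b$ by transitivity, while $b\succeq c$ together with $a\succeq b$ would force $a\succeq c$, contradicting $c\succ a$. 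Hence $|D(a)|\leq|D(b)|$, i.e.\ $r(a)\leq r(b)$. For the reverse direction, I would assume $a\nsucceq b$ (equivalently $b\succ a$ by strong completeness) and show $D(b)\subsetneq D(a)$: transitivity gives $D(b)\subseteq D(a)$, while $b\in D(a)\setminus D(b)$ (since $b\succ a$ but $b\not\succ b$), so $|D(b)|<|D(a)|$. Once this biconditional is in hand, \eqref{dummyEq67} follows immediately for the $r$ just constructed, and the equivalence $a\succeq_r b\Leftrightarrow r(a)\leq r(b)\Leftrightarrow \neg(b\nsucceq a)\wedge\ldots\Leftrightarrow a\succeq b$ yields $\succeq_r=\succeq$.

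For the last claim, I would simply note that $r$ is strict exactly when $r$ is injective, i.e.\ when $r(a)=r(b)\Rightarrow a=b$. Translating via \eqref{dummyEq61}, $r(a)=r(b)$ is the same as $a\succeq_r b$ and $b\succeq_r a$, so strictness of $r$ is precisely antisymmetry of $\succeq_r$, which (combined with the already-established transitivity and strong completeness) is exactly totality. I expect the main obstacle to be the two-way implication in the displayed biconditional above; everything else is essentially bookkeeping with the definitions of ranking and weak ordering.
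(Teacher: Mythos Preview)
Your proposal is correct and follows essentially the same route as the paper: both define the inverse map by $r(a)=\mathrm{card}\{b:a\nsucceq b\}+1$ and establish the key biconditional relating $r$-values to the given weak ordering via the subset argument on the strict upper sets, with properness witnessed by the element $a$ (or $b$) itself. The only cosmetic difference is that you phrase the biconditional in its strict form $r(b)<r(a)\Leftrightarrow a\nsucceq b$, whereas the paper states the equivalent non-strict form $r(b)\leq r(a)\Leftrightarrow b\succeq a$.
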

\begin{proof}
It is easy to check that $\succeq _r$, as defined by \cref{dummyEq61} is a weak ordering on $S$. Using \cref{dummyEq61}, \cref{dummyEq63} can be rewritten as
\begin{equation}
\label{dummyEq65}
    r(a)=card\{b\in S:r(b)<r(a)\}+1,
\end{equation}
which is equivalent to \cref{dummyEq67}, so it holds by definition. By \cref{dummyEq63}, $r$ is uniquely determined by $\succeq _r$, so the map $r\mapsto \succeq _r$ is one-to-one. To show that it is onto, let ``$\succeq$'' be a weak ordering on $S$ and define $r:S\to [S]$ by
\begin{equation}
\label{dummyEq69}
    r(a)=card\{b\in S:a\nsucceq b\}+1.
\end{equation}
We claim that $r(b)\leq r(a)$ is equivalent to $b\succeq a$. First note that if $b\succeq a$, then by transitivity $\{c\in S:b\nsucceq c\}$ is a subset of $\{c\in S:a\nsucceq c\}$, hence $r(b)\leq r(a)$. For the converse, assume that $b\nsucceq a$. Then we must have $a\succeq b$, and we get as above that $\{c\in S:a\nsucceq c\}$ is a subset of $\{c\in S:b\nsucceq c\}$, but this time it is a proper subset, because $a$ belongs to the latter. Therefore $r(a)<r(b)$, which completes the proof of our claim. Hence, by \cref{dummyEq61}, $\succeq$ is the same relation as $\succeq_r$, which shows that the mapping $r\mapsto \succeq_r$ is onto.

The last assertion follows from the fact that $a\succeq _r b$ and $b\succeq _r a$ hold simultaneously if and only if $r(a)=r(b)$.

\end{proof}

\section{Supporting proofs}
\label{appendix}

Here we give the proofs of \cref{lemmaBiasedRandomWalk,lemmaConditioningContinuity,lemmaDoubleConvergence}. For ease of reference, we repeat each statement before the proof.

\begin{replemma}{lemmaBiasedRandomWalk}
\label{lemmaBiasedRandomWalk2}
Let $(\Omega , {\cal G},\mathbb{P})$ be a probability space. Let $S$ be a finite set and for each $r\in S$, $\nu^r$ a distribution on $\R$ such that it either has positive mean or $\nu^r(\{0\})=1$. Let $\{ R_n\} _{n\in\N}$ be a sequence of random elements in $S$ and $\{ Y_n\} _{n\in \N }$ a sequence of random variables with $Y_0=0$. Suppose that $\Delta Y_{n+1}$ is conditionally independent of $\{(Y_k,R_k)\}_{k\leq n}$ conditioned on $R_n$, with distribution $\nu ^{R_n}$. In other words, for any $A\in {\cal B}(\R)$, $n\in\N$,
\begin{equation}
\label{eqDummy94}
    \Pr{\Delta Y_{n+1}\in A\midvert \{(Y_k,R_k)\}_{k\leq n}}=\nu^{R_n}(A)\ \ a.s.
\end{equation}
Then,
\begin{equation}
\label{eqDummy97}
    \Pr{\capdu{n\in\N}{}\{Y_n\geq 0\}}\geq \epsilon >0,
\end{equation}
where $\epsilon$ depends only on the distributions $\nu^r$, $r\in S$.
\end{replemma}

\begin{proof}
Let $\{U^r_n\}^{r\in S}_{n\in\N}$ be a collection of independent random variables, independent of $\{(Y_n,R_n)\}_{n\in\N}$, and such that $U^r_n\sim \nu^r$ for all $r\in S$, $n\in\N$, where the relation $\sim$ means equality in distribution. Define $Y'_0=Y_0=0$ and for each $n\in\N$, \begin{equation}
\label{eqDummy85}
    Y'_{n+1}=Y'_n+U^{R_n}_{n}.
\end{equation}
Clearly, for any $A\in{\cal B}(\R^d)$,
\begin{equation}
\label{eqDummy33}
\begin{aligned}
    \Pr{\Delta Y_{n+1}'\in A\midvert \{(Y'_k,R_k)\} _{k\leq n}} & = \Pr{U^{R_n}_{n}\in A\midvert R_n}=\nu^{R_n}(A),
\end{aligned}
\end{equation}
therefore $\{Y'_n\}_{n\in\N}\sim \{Y_n\}_{n\in\N}$. It is hence enough to show that \cref{eqDummy97} holds for the sequence $Y'_n$ instead of $Y_n$.

For each $r\in S$, define $\tau ^r_0=-1$ and inductively $\tau ^r_n=\inf\, \{k>\tau ^r_{n-1}:R_k=r\}$. Note that each $Y_n$ is a sum of terms of the form $U^r_{\tau _k}$, for $k=1,\ldots ,m_r$, where $m_r\in \N$, $r\in S$. Therefore,
\begin{equation}
\label{eqDummy99}
    \capdu{n\in\N}{}\{Y'_{n}\geq 0\}\supset\capdu{r\in S}{}\capdu{n\in\N}{}\left\{\sumdu{k=1}{n}U^{r}_{\tau ^r_k}\geq 0\right\},
\end{equation}
with the convention $U^{r}_{\tau ^r_k}=0$ when $\tau^r_k=\infty $.

Since $\tau^r_k\independent\{U^r_n\}_{n\in\N}$, if the $\tau ^r_k$'s were all finite a.s., it would easily follow that $U^r_{\tau ^r_k}$ has the same distribution as $U^r_1$, $k\in\N$, and since $\tau^r_k$ is strictly increasing in $k$ we would even get that $\{U^r_{\tau^r_k}\}_{k\in\N}$ is i.i.d. To deal with the case $\tau^r_n=\infty$, we define the random times $\sigma ^r_n$ as follows: Let $v^r=\sup\,\{n\in\N:\tau ^r_n<\infty \}$ and
\begin{equation}
    \sigma  ^r_n=\tau ^r_n\cdot \mathbf{1}_{n\leq v^r}+(\tau ^r_{v^r}+n-v_r)\cdot \mathbf{1}_{n>v^r}.
\end{equation}
The $\sigma ^r_n$'s are almost surely finite and distinct for fixed $r\in S$, and $\{\sigma ^r_n\}^{r\in S}_{n\in\N}\independent\{U^r_n\}^{r\in S}_{n\in\N}$. Therefore, by \cite[Theorem 2.1]{melfi2000estimation} we get that $\left\{U^r_{\sigma^r_n}\right\}^{r\in S}_{n\in\N}\sim \left\{U^r_{n}\right\}^{r\in S}_{n\in\N}$. (In \cite{melfi2000estimation} it is assumed that the $\sigma ^r_n$'s are all distinct a.s., even for different $r$'s, but this assumption can be substituted by the fact that the sequences $\{U^r_n\}_{n\in\N}$ are independent for different $r$'s and the proof goes through.)

Now observe that $\sigma ^r_k=\tau ^r_k$ on $\{\tau ^r_k<\infty \}$, therefore, by \cref{eqDummy99},
\begin{equation}
    \capdu{n\in\N}{}\{Y'_n\geq 0\}\supset \capdu{r\in S}{}\capdu{n\in\N}{}\left\{\sumdu{k=1}{n}U^r_{\sigma^r_k}\geq 0\right\}
\end{equation}
Consequently,
\begin{equation}
\begin{aligned}
    \Pr{\capdu{n\in\N}{}\{Y'_n\geq 0\}} & \geq \Pr{\capdu{r\in S}{}\capdu{n\in\N}{}\left\{\sumdu{k=1}{n}U^r_{\sigma^r_k}\geq 0\right\}}\\
    & =\proddu{r\in S}{}\Pr{\capdu{n\in\N}{}\left\{\sumdu{k=1}{n}U^r_{k}\geq 0\right\}}>0,
\end{aligned}
\end{equation}
because $\{U^r_k\}_{k\in\N}$ is an i.i.d. sequence of random variables that are either identically $0$ or they have a positive mean.
\end{proof}

\begin{replemma}{lemmaConditioningContinuity}
\label{lemmaConditioningContinuityAppendix}
Let $A_n$, $n\in\N$, and $A$ be measurable sets in a probability space, each with positive probability, and suppose that $A_n\to A$ a.s. (i.e. $\Pr{(A_n\backslash A)\cup (A\backslash A_n)}\to 0$). Then, $\Pr{S\midvert A_n}\to \Pr{S\midvert A}$ uniformly in $S\in {\cal F}$.
\end{replemma}

\begin{proof}
We have
\begin{equation}
\begin{aligned}
    &\hspace{0.47cm} \left|\Pr{S\midvert A}-\Pr{S\midvert A_n}\right|\\
    & =\left|\frac{\Pr{S\cap A}}{\Pr{A}}-\frac{\Pr{S\cap A_n}}{\Pr{A_n}}\right|\\
    &=\left|\frac{\Pr{S\cap A_n}+\Pr{S\cap A\backslash A_n}-\Pr{S\cap A_n\backslash A}}{\Pr{A}}-\frac{\Pr{S\cap A_n}}{\Pr{A_n}}\right|\\
    & \leq \Pr{S\cap A_n}\cdot \left|\frac{1}{\Pr{A}}-\frac{1}{\Pr{A_n}}\right|+\frac{\left|\Pr{S\cap A\backslash A_n}-\Pr{S\cap A_n\backslash A}\right|}{\Pr{A}}\\
    &\leq \left|\frac{1}{\Pr{A}}-\frac{1}{\Pr{A_n}}\right|+\frac{\Pr{A\backslash A_n}+\Pr{A_n\backslash A}}{\Pr{A}}\\
\end{aligned}
\end{equation}
The quantity in the last line does not depend on $S$ and, by assumption, it converges to $0$ as $n\to\infty $.
\end{proof}

\begin{replemma}{lemmaDoubleConvergence}
\label{lemmaDoubleConvergenceAppendix}
Let $a_{m,n}\in\R$, $m,n\in\N$, and suppose that $\underset{m\to\infty}{\lim }a_{m,n}=a_n\in \R$ uniformly in $n$, and $\limn a_{m,n}=a\in\R$ for all $m\in\N$. Then, $\limn a_n=a$.
\end{replemma}
\begin{proof}
Let $\epsilon >0$ and let $m_0\in\N$ be such that $|a_{m_0,n}-a_n|<\epsilon $ for all $n\in\N$. Now let $n_0\in\N$ be such that $|a_{m_0,n}-a|<\epsilon $ for all $n\geq n_0$. It follows that $|a_n-a|<2\epsilon$ for all $n\geq n_0$.
\end{proof}

\end{appendix}

\section*{Acknowledgements}

We would like to thank and Thorsten Joachims, Gabor Lugosi and Murad Taqqu for their remarks in previous versions of this manuscript. This research was supported in part
through NSF Award IIS-1513692.

\bibliographystyle{plainnat}
\bibliography{ranking-based-systems}

\end{document}